\newtheorem{theorem}{Theorem}[section]
\newtheorem{lemma}[theorem]{Lemma}
\newtheorem{cor}[theorem]{Corollary} 
\newtheorem{proposition}[theorem]{Proposition} 
\theoremstyle{definition}
\newtheorem{definition}[theorem]{Definition}
\newtheorem{example}[theorem]{Example}
\theoremstyle{remark}
\newtheorem{remark}[theorem]{Remark}
\date{}
  \title{Remarks on abstract structures \\of propositions and realizers}
  \author{Samuele Maschio
  }
\begin{document}
\maketitle
\begin{abstract}
We present here an abstract notion of structure consisting of proposi-
tions and realizers (which we call PR-structures) giving rise to set based
contravariant functors taking values in the category of sets endowed with
binary relations. We will characterize those PR-structures giving rise to
preorderal and posetal doctrines and we will study in particular the case
of a PR-structure induced by a partial applicative structure.
\end{abstract}

\section{Introduction}
Every topos of Boolean or Heyting valued sets and essentially every topos which is known under the name of \emph{``whatever realizability'' topos} can be obtained as the result of a tripos-to-topos construction (see e.g.\ \cite{tripos}) based on the category of sets. However, these triposes have a very specific aspect: they all involve some abstract notion of proposition and/or of realizer. E.g.\ in the case of the tripos of Heyting-valued sets \cite{higgs1973category}, the role of propositions is taken by a complete Heyting algebra; in the case of the tripos giving rise to the effective topos (see \cite{eff}), the realizers are natural numbers while propositions are identified with subsets of $\mathbb{N}$. A very general construction combining together realizers and propositions was recently proposed by A.Miquel in \cite{Miquel2018ImplicativeAA}.

Here we follow a different direction. We will study very general notions of realizer and of proposition combined together to give rise to a contravariant functors from $\mathsf{Set}$ to the category of sets endowed with a binary relation. For this aim, we will introduce the notion of PR-structure and we will find characterizations for those PR-structures giving rise to preoderal and posetal doctrines. In particular, in the last part of the paper, we will focus on doctrines coming from PR-structures defined using a partial applicative structure. 

We will take Zermelo-Fraenkel set theory with axiom of choice $\mathsf{ZFC}$ to be our metatheory, however we will use the axiom of choice only in the proof of theorem \ref{can}.

%
%
%
%
%
%
%
%
%

\section{PR-structures}
We give here the definition of PR-structure and then we will provide some examples.
\begin{definition} A \emph{PR-structure} is a triple $\Sigma=(\mathsf{P}_\Sigma,\mathsf{R}_\Sigma,\rho^\Sigma)$ where $\mathsf{P}_\Sigma$ and $\mathsf{R}_\Sigma$ are non-empty sets and $\rho^\Sigma$ is a function from $\mathsf{P}_\Sigma\times \mathsf{P}_\Sigma$ to $\mathcal{P}(\mathsf{R}_\Sigma)$.
\end{definition}
\begin{definition} Let $\Sigma$ be a PR-structure. 
 If $I$ is a set and $\varphi,\psi:I\rightarrow \mathsf{P}_\Sigma$ are functions, then the relation $\varphi\vdash^{\Sigma}_I \psi$ holds if and only if $\bigcap_{i\in I}\rho^\Sigma(\varphi(i),\psi(i))\neq \emptyset$. We also define a binary relation $\vdash^{\Sigma}$ on $\mathsf{P}_\Sigma$ as follows: for $a,b\in \mathsf{P}_\Sigma$, $a\vdash^\Sigma b$ if and only if $\rho^\Sigma(a,b)\neq \emptyset$. 
\end{definition}
\begin{definition}
We denote with $\mathsf{Bin}$ the category whose objects are pairs $(A,R)$ consisting of a set $A$ and a binary relation $R$ on it. 
An arrow in $\mathsf{Bin}$ from $(A,R)$ to $(B,S)$ is a function $f:A\rightarrow B$ such that for every $a,a'\in A$, if $R(a,a')$ then $S(f(a),f(a'))$. Compositions and identities are the set-theoretical ones.
\end{definition}

\begin{proposition}The assignments $I\mapsto (\mathsf{P}_\Sigma^{I},\vdash^{\Sigma}_I)$ and $f\mapsto (-)\circ f$ 
define a contravariant functor $\mathbf{p}_\Sigma$ from $\mathsf{Set}$ to $\mathsf{Bin}$.
\end{proposition}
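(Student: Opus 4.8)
The plan is to verify in turn the three things required for $\mathbf{p}_\Sigma$ to be a well-defined contravariant functor $\mathsf{Set}\to\mathsf{Bin}$: that every value $(\mathsf{P}_\Sigma^{I},\vdash^{\Sigma}_I)$ is an object of $\mathsf{Bin}$; that for every function $f$ the precomposition map $(-)\circ f$ is an arrow of $\mathsf{Bin}$, i.e.\ preserves the respective relations; and that identities and composition are respected, the latter in the order-reversing sense. The first point is immediate, since $\mathsf{P}_\Sigma^{I}$ is a set and $\vdash^{\Sigma}_I$ is by its very definition a binary relation on it, and $\mathsf{Bin}$ imposes no further condition on objects.

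The core of the argument is the second point. Fix $f\colon I\to J$; then $(-)\circ f$ sends $\psi\colon J\to\mathsf{P}_\Sigma$ to $\psi\circ f\colon I\to\mathsf{P}_\Sigma$, so it is a function $\mathsf{P}_\Sigma^{J}\to\mathsf{P}_\Sigma^{I}$ which must be shown to carry $\vdash^{\Sigma}_J$ into $\vdash^{\Sigma}_I$. Assume $\varphi\vdash^{\Sigma}_J\psi$, so that some $x$ lies in $\bigcap_{j\in J}\rho^\Sigma(\varphi(j),\psi(j))$. The observation that drives the proof is that this same $x$ already realizes the restricted relation: for each $i\in I$, instantiating the membership at $j=f(i)\in J$ gives $x\in\rho^\Sigma(\varphi(f(i)),\psi(f(i)))$, whence $x\in\bigcap_{i\in I}\rho^\Sigma((\varphi\circ f)(i),(\psi\circ f)(i))$ and therefore $\varphi\circ f\vdash^{\Sigma}_I\psi\circ f$. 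Put differently, since $f(I)\subseteq J$ the intersection indexed by $J$ is contained in the one obtained by reindexing through $f$, so non-emptiness of the former forces non-emptiness of the latter; no choice is needed, as a single witness is transported.

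The third point reduces to elementary properties of function composition. Precomposition with $\mathrm{id}_I$ is the identity on $\mathsf{P}_\Sigma^{I}$, giving $\mathbf{p}_\Sigma(\mathrm{id}_I)=\mathrm{id}_{\mathbf{p}_\Sigma(I)}$; and for $f\colon I\to J$ and $g\colon J\to K$, associativity yields $\chi\circ(g\circ f)=(\chi\circ g)\circ f$ for every $\chi\colon K\to\mathsf{P}_\Sigma$, which is exactly the contravariant law $\mathbf{p}_\Sigma(g\circ f)=\mathbf{p}_\Sigma(f)\circ\mathbf{p}_\Sigma(g)$.

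I do not expect any serious obstacle: the content is concentrated entirely in the monotonicity observation of the second paragraph, and everything else is bookkeeping. The one point worth stating explicitly is the degenerate case $I=\emptyset$, where $\bigcap_{i\in\emptyset}$ is read as the ambient set $\mathsf{R}_\Sigma$; here $\vdash^{\Sigma}_\emptyset$ holds for the unique element of $\mathsf{P}_\Sigma^{\emptyset}$ precisely because $\mathsf{R}_\Sigma$ is non-empty, so the hypothesis that $\mathsf{R}_\Sigma\neq\emptyset$ in the definition of a PR-structure is exactly what keeps relation preservation valid when the source of $f$ is empty.
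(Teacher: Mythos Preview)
Your proof is correct and follows essentially the same approach as the paper: the core step is the containment $\bigcap_{j\in J}\rho^\Sigma(\varphi(j),\psi(j))\subseteq\bigcap_{i\in I}\rho^\Sigma((\varphi\circ f)(i),(\psi\circ f)(i))$, which transfers non-emptiness. You spell out more of the routine bookkeeping (object well-definedness, functoriality axioms, the empty-index-set case) that the paper leaves implicit, but the substance is the same.
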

\begin{proof}
Suppose that $f:I\rightarrow J$ is a function and that $\varphi,\psi\in \mathsf{P}_\Sigma^J$ satisfy $\varphi\vdash^{\Sigma}_{J}\psi$. Then,
$$\bigcap_{i\in I}\rho^\Sigma ((\varphi\circ f)(i),(\psi\circ f)(i))=\bigcap_{i\in I}\rho^\Sigma (\varphi(f(i)),\psi (f(i)))\supseteq \bigcap_{j\in J}\rho^\Sigma (\varphi(j),\psi (j))\neq \emptyset$$
Thus, $\varphi\circ f\vdash^{\Sigma}_{I}\psi\circ f$.
\end{proof}
\begin{definition} Two PR-structures $\Sigma$ and $\Sigma'$ are said to be \emph{equivalent}, and in this case we write $\Sigma\approx\Sigma'$, if $\mathsf{P}_\Sigma=\mathsf{P}'_\Sigma$ and $\mathbf{p}_\Sigma =\mathbf{p}_{\Sigma'} $.

We define the degree $\delta(\Sigma)$ of a PR-structure $\Sigma$ as $\min\{|\mathsf{R}_{\Sigma'}|\,|\,\Sigma'\approx\Sigma \}$. We say that $\Sigma$ is a \emph{P-structure} if $\delta(\Sigma)=1$ and we say that $\Sigma$ is \emph{finite} if $\delta(\Sigma)$ is finite.
\end{definition}

\begin{definition}
A PR-structure $\Sigma$ is called \emph{partitioned} if $|\rho^\Sigma(a,b)|\leq 1$ for all $a,b\in \mathsf{P}_\Sigma$.
\end{definition}

\begin{definition}
Let $\Sigma$ be a PR-structure and let $r\in \mathsf{R}_\Sigma$. Then we define 
$$\rho^{\Sigma-}(r):=\{(a,b)\in \mathsf{P}\times \mathsf{P}|\,r\in \rho^\Sigma(a,b)\}$$
\end{definition}
The following results immediately follow from the definitions.
\begin{proposition}
A PR-structure $\Sigma$ is partitioned if and only if $\rho^{\Sigma -}(r)\cap \rho^{\Sigma -}(s)=\emptyset$ for every $r,s\in \mathsf{R}_\Sigma$ with $r\neq s$. 
\end{proposition}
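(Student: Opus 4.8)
The plan is to prove both implications directly by unfolding the definition of $\rho^{\Sigma-}$. The single key observation is that a pair $(a,b)$ lies in $\rho^{\Sigma-}(r)\cap\rho^{\Sigma-}(s)$ precisely when both $r$ and $s$ belong to $\rho^\Sigma(a,b)$. Consequently, a nonempty intersection for some $r\neq s$ is exactly a witness that $\rho^\Sigma(a,b)$ contains a two-element subset $\{r,s\}$, and conversely any two distinct members of some $\rho^\Sigma(a,b)$ produce a pair in the corresponding intersection. So the whole statement is the reformulation of ``every fiber $\rho^\Sigma(a,b)$ has at most one element'' in terms of pairwise disjointness of the preimage sets $\rho^{\Sigma-}(r)$.

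For the forward direction I would assume $\Sigma$ is partitioned, fix $r\neq s$ in $\mathsf{R}_\Sigma$, and argue by contradiction: if some $(a,b)$ belonged to $\rho^{\Sigma-}(r)\cap\rho^{\Sigma-}(s)$, then $\{r,s\}\subseteq\rho^\Sigma(a,b)$, whence $|\rho^\Sigma(a,b)|\geq 2$, contradicting the hypothesis $|\rho^\Sigma(a,b)|\leq 1$. Hence each such intersection is empty. For the converse I would assume the disjointness condition holds for all $r\neq s$ and suppose toward a contradiction that $\Sigma$ is not partitioned, so that there exist $a,b$ with $|\rho^\Sigma(a,b)|\geq 2$; choosing two distinct elements $r,s\in\rho^\Sigma(a,b)$ yields $(a,b)\in\rho^{\Sigma-}(r)\cap\rho^{\Sigma-}(s)$, contradicting disjointness.

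I do not expect any genuine obstacle: the proof is a routine double implication with no appeal to the ambient functorial machinery. The only point requiring a little care is keeping the quantifier structure straight, since the two conditions are universally quantified over different objects — the partitioned condition ranges over pairs $(a,b)\in\mathsf{P}_\Sigma\times\mathsf{P}_\Sigma$, while the disjointness condition ranges over pairs $(r,s)\in\mathsf{R}_\Sigma\times\mathsf{R}_\Sigma$ with $r\neq s$. Phrasing both directions contrapositively, as above, makes the extraction of the relevant witness transparent and matches the author's remark that the result ``immediately'' follows from the definitions.
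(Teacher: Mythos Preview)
Your proposal is correct and is exactly the direct unfolding of the definitions that the paper has in mind; indeed, the paper gives no proof at all, simply remarking that the result ``immediately follows from the definitions.'' Your contrapositive argument in each direction is the natural way to make this explicit, and there is nothing to add.
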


\begin{proposition}\label{pointwise} If  $\Sigma$ is a PR-structure and $I$ is a set, then $\varphi\vdash^{\Sigma}_{I}\psi$ implies $\varphi(i) \vdash^{\Sigma} \psi(i)$ for every $i\in I$.\end{proposition}
\begin{proposition}\label{singleton} If $\Sigma$ is a PR-structure and $|I|=1$, then $(\mathsf{P}_\Sigma,\vdash^{\Sigma})$ is isomorphic to $(\mathsf{P}_\Sigma^I,\vdash_{I}^{\Sigma})$ in $\mathsf{Bin}$.
\end{proposition}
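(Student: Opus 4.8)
The plan is to exhibit an explicit isomorphism in $\mathsf{Bin}$ given by evaluation at the unique point of $I$. First I would fix a one-element set $I=\{\ast\}$ and consider the map $\mathrm{ev}\colon \mathsf{P}_\Sigma^{I}\to \mathsf{P}_\Sigma$ sending $\varphi\mapsto\varphi(\ast)$. This is a bijection of underlying sets: its inverse sends $a\in\mathsf{P}_\Sigma$ to the unique function $I\to\mathsf{P}_\Sigma$ with value $a$. So the only real content is to check that $\mathrm{ev}$ and its set-theoretic inverse are both arrows in $\mathsf{Bin}$, i.e.\ that $\mathrm{ev}$ transports the relation $\vdash^{\Sigma}_{I}$ to $\vdash^{\Sigma}$ in both directions.

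The key observation driving the verification is that an intersection indexed by a singleton reduces to a single term: since $I=\{\ast\}$, for any $\varphi,\psi\in\mathsf{P}_\Sigma^{I}$ we have $\bigcap_{i\in I}\rho^\Sigma(\varphi(i),\psi(i))=\rho^\Sigma(\varphi(\ast),\psi(\ast))$. Unwinding the definition of $\vdash^{\Sigma}_{I}$ then gives that $\varphi\vdash^{\Sigma}_{I}\psi$ holds if and only if $\rho^\Sigma(\varphi(\ast),\psi(\ast))\neq\emptyset$, which is by definition exactly $\varphi(\ast)\vdash^{\Sigma}\psi(\ast)$, i.e.\ $\mathrm{ev}(\varphi)\vdash^{\Sigma}\mathrm{ev}(\psi)$. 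This biconditional simultaneously shows that $\mathrm{ev}$ preserves the relation and that its inverse does too, so $\mathrm{ev}$ is an isomorphism in $\mathsf{Bin}$.

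I expect no serious obstacle here; the statement is essentially a bookkeeping exercise. The only points requiring care are (i) recalling that an isomorphism in $\mathsf{Bin}$ demands the biconditional $R(a,a')\Leftrightarrow S(f(a),f(a'))$ rather than a mere one-directional implication, so it is worth making explicit that both the forward map and its inverse are arrows, and (ii) being explicit that the singleton intersection collapses, since this is precisely the step that makes $\vdash^{\Sigma}_{I}$ and $\vdash^{\Sigma}$ literally agree under the identification $\mathsf{P}_\Sigma^{I}\cong\mathsf{P}_\Sigma$.
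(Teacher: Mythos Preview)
Your proposal is correct and is exactly the natural unpacking of the definitions; the paper itself gives no proof at all, merely stating that the result ``immediately follow[s] from the definitions.'' Your explicit evaluation-map argument with the observation that the singleton-indexed intersection collapses is precisely what that phrase encodes.
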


\begin{example}Here are some examples of PR-structure.
\begin{enumerate}
\item If $\mathcal{P}=(|\mathcal{P}|,\leq)$ is an object of the category $\mathsf{Bin}$, we define the PR-structure $\Sigma[\mathcal{P}]$ as $(|\mathcal{P}|,\{*\},\chi_\leq)$ where
$$
\begin{cases}
\chi_\leq(a,b):=\{*\}\,\textrm{ if }a\leq b\\
\chi_\leq (a,b):=\emptyset \textrm{ otherwise}\\
\end{cases}
$$
If $\mathcal{P}$ is a complete Heyting algebra, $\mathbf{p}_{\Sigma[\mathcal{P}]}$ is the tripos giving rise to a topos of Heyting-valued sets (see \cite{higgs1973category}).

\item If $\mathcal{R}=(|\mathcal{R}|,\cdot_\mathcal{R})$ is a \emph{partial applicative structure}, that is a non-empty set $|\mathcal{R}|$ together with a partial binary function $\cdot_\mathcal{R}:|\mathcal{R}|\times |\mathcal{R}|\rightharpoonup |\mathcal{R}|$, we can consider the PR-structure $\Sigma[\mathcal{R}]:=(\mathcal{P}(|\mathcal{R}|),|\mathcal{R}|,\Rightarrow^\mathcal{R})$ where for $A,B\subseteq |\mathcal{R}|$
$$A\Rightarrow^\mathcal{R} B:=\{r\in |\mathcal{R}||\,\textrm{ for every }a\in A(r\cdot_\mathcal{R} a\downarrow\textrm{ and }r\cdot_\mathcal{R} a\in B)\}.$$
If $\mathcal{R}$ is a partial combinatory algebra (for short pca, see e.g.\ \cite{VOO08}), then $\Sigma[\mathcal{R}]$ is the tripos giving rise to the realizability topos $\mathsf{RT}[\mathcal{R}]$. In particular, if $\mathcal{R}$ is the first Kleene algebra $\mathcal{K}_1$, then $\Sigma[\mathcal{K}_1] $ is the tripos giving rise to the effective topos $\mathcal{E}ff$ (see \cite{eff}).
\item If $\mathcal{A}_\#$ is a subpca of a pca $\mathcal{A}$, then we can define a PR-structure $\Sigma[\mathcal{A}_\#,\mathcal{A}]$ as follows:
\begin{enumerate}
\item $\mathsf{P}_{\Sigma[\mathcal{A}_\#,\mathcal{A}]}:=\{(I,J)\in \mathcal{P}(|\mathcal{A}_\#|)\times  \mathcal{P}(|\mathcal{A}|)|\,I\subseteq J\}$
\item $\mathsf{R}_{\Sigma[\mathcal{A}_\#,\mathcal{A}]}:=|\mathcal{A}_\#|$
\item $\rho^{\Sigma[\mathcal{A}_\#,\mathcal{A}]}((I,J),(I',J'))=(I\rightarrow^\mathcal{A} I')\cap (I\rightarrow^\mathcal{A_\#} I')\cap |\mathcal{A}_\#|$
\end{enumerate}
The doctrines $\mathbf{p}_{\Sigma[\mathcal{A}_\#,\mathcal{A}]}$ are exactly the triposes giving rise to nested realizability toposes (see \cite{MR3325573} and \cite{MR1933398}).
\item If $\mathcal{A}_\#$ is a subpca of a pca $\mathcal{A}$, then we can define a PR-structure $\Sigma_{rel}[\mathcal{A}_\#,\mathcal{A}]$ as follows:
\begin{enumerate}
\item $\mathsf{P}_{\Sigma_{rel}[\mathcal{A}_\#,\mathcal{A}]}:= \mathcal{P}(|\mathcal{A}|)$
\item $\mathsf{R}_{\Sigma_{rel}[\mathcal{A}_\#,\mathcal{A}]}:=|\mathcal{A}_\#|$
\item $\rho^{\Sigma_{rel}[\mathcal{A}_\#,\mathcal{A}]}(I,I')=(I\rightarrow^\mathcal{A} I')\cap |\mathcal{A}_\#|$
\end{enumerate}
The doctrine $\mathbf{p}_{\Sigma_{rel}[\mathcal{A}_\#,\mathcal{A}]}$ are exactly the triposes giving rise to relative realizability toposes (see  \cite{MR1933398}).
\item Similarly one can also produce PR-structures for which the relative doctrines are the triposes giving rise to modified relative realizability toposes (see \cite{MR1933398}) and to classical realizability toposes (see \cite{MR3150099} and \cite{MR2675258}).
\end{enumerate}
\end{example}

\subsection{A canonical representation for PR-structures}

The following lemma is an immediate consequence of the definition of the notion of equivalence between PR-structures.

\begin{lemma}\label{lemma1} Let $\Sigma$ be a PR-structure and suppose $\overline{r}\in \mathsf{R}_\Sigma$ and $S\subseteq \mathsf{R}_\Sigma\setminus \{\overline{r}\}$ satisfy $\rho^{\Sigma-} (s)\subseteq \rho^{\Sigma-} (\overline{r})$ for every $s\in S$.
Then 
$$\Sigma\approx (\mathsf{P}_\Sigma,\mathsf{R}_\Sigma \setminus S,(a,b)\mapsto \rho(a,b)\setminus S).$$

\end{lemma}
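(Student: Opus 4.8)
The plan is to unwind the definition of $\approx$. Writing $\Sigma'$ for the candidate structure on the right, the equality $\mathsf{P}_{\Sigma'}=\mathsf{P}_\Sigma$ holds by construction, and the new realizer set $\mathsf{R}_\Sigma\setminus S$ is non-empty since $\overline{r}$ lies in it (because $S\subseteq \mathsf{R}_\Sigma\setminus\{\overline{r}\}$), so $\Sigma'$ is a genuine PR-structure with the same propositions; note also that $\rho^\Sigma(a,b)\setminus S$ indeed lands in $\mathcal{P}(\mathsf{R}_\Sigma\setminus S)$. Since $\mathbf{p}_\Sigma$ and $\mathbf{p}_{\Sigma'}$ send each set $I$ to objects with identical underlying set $\mathsf{P}_\Sigma^{I}$ and act identically on arrows (both by precomposition), to prove $\mathbf{p}_\Sigma=\mathbf{p}_{\Sigma'}$ it suffices to show that the binary relations coincide, i.e.\ that $\varphi\vdash^{\Sigma}_I\psi$ if and only if $\varphi\vdash^{\Sigma'}_I\psi$ for every set $I$ and all $\varphi,\psi\in\mathsf{P}_\Sigma^{I}$.

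One implication is immediate: since $\rho^{\Sigma'}(a,b)=\rho^\Sigma(a,b)\setminus S\subseteq\rho^\Sigma(a,b)$, a non-empty intersection $\bigcap_{i\in I}\rho^{\Sigma'}(\varphi(i),\psi(i))$ forces $\bigcap_{i\in I}\rho^\Sigma(\varphi(i),\psi(i))$ to be non-empty as well, so $\varphi\vdash^{\Sigma'}_I\psi$ implies $\varphi\vdash^{\Sigma}_I\psi$.

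For the converse, suppose $\varphi\vdash^{\Sigma}_I\psi$ and pick $t\in\bigcap_{i\in I}\rho^\Sigma(\varphi(i),\psi(i))$. If $t\notin S$, then $t$ already witnesses $\varphi\vdash^{\Sigma'}_I\psi$. The key point is that $\overline{r}$ serves as a uniform replacement otherwise: if $t\in S$, the hypothesis gives $\rho^{\Sigma-}(t)\subseteq\rho^{\Sigma-}(\overline{r})$, and since $(\varphi(i),\psi(i))\in\rho^{\Sigma-}(t)$ for every $i$, we obtain $(\varphi(i),\psi(i))\in\rho^{\Sigma-}(\overline{r})$, that is $\overline{r}\in\rho^\Sigma(\varphi(i),\psi(i))$, for every $i$. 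As $\overline{r}\notin S$, this exhibits $\overline{r}\in\bigcap_{i\in I}\rho^{\Sigma'}(\varphi(i),\psi(i))$, giving $\varphi\vdash^{\Sigma'}_I\psi$.

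There is no real obstacle here beyond spotting the substitution: the whole argument rests on the observation that the domination condition $\rho^{\Sigma-}(s)\subseteq\rho^{\Sigma-}(\overline{r})$ lets $\overline{r}$, which survives in $\mathsf{R}_\Sigma\setminus S$, take over the realizing role of any discarded $s\in S$ \emph{simultaneously} across all indices $i\in I$. This uniformity is precisely what is needed, because derivability $\vdash^{\Sigma}_I$ demands a single realizer in the intersection over $I$, and a single $\overline{r}$ does the job no matter which elements of $S$ happened to realize the various coordinates.
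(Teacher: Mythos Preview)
Your proof is correct and is exactly the unfolding the paper has in mind: the paper does not actually give a proof of this lemma, stating only that it ``is an immediate consequence of the definition of the notion of equivalence between PR-structures,'' and your argument is precisely that immediate consequence spelled out. One minor remark: the final paragraph's phrasing about ``which elements of $S$ happened to realize the various coordinates'' is slightly misleading, since by definition of $\vdash^{\Sigma}_I$ the witness $t$ is already a single realizer working uniformly across $I$; but this does not affect the correctness of the argument.
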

%
%

We can now prove that every PR-structure has a canonical representation.

\begin{theorem}	\label{can} Every PR-structure $\Sigma$ is equivalent to a PR-structure of the form $(\mathsf{P}_\Sigma,\mathcal{I},\epsilon)$ in which 
\begin{enumerate}
\item $\mathcal{I}$ is an antichain in $(\mathcal{P}(\mathsf{P}_\Sigma\times \mathsf{P}_\Sigma),\subseteq)$, that is for every $I,J\in \mathcal{I}$, if $I\subseteq J$, then $I=J$.

\item $\epsilon(x,y)=\{I\in \mathcal{I}|\,(x,y)\in I\}$ for every $x,y\in \mathsf{P}_\Sigma$.
\end{enumerate}
\end{theorem}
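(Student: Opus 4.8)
The plan is to translate the statement into the language of the preimage sets $\rho^{\Sigma-}(r)$ and then to recognise the required structure $(\mathsf{P}_\Sigma,\mathcal{I},\epsilon)$ as the one whose realizers are the maximal such preimages. First I would record a reformulation of the entailment relation: for any set $I$ and any $\varphi,\psi\colon I\to \mathsf{P}_\Sigma$, one has $\varphi\vdash^{\Sigma}_I\psi$ if and only if there is some $r\in \mathsf{R}_\Sigma$ with $\{(\varphi(i),\psi(i)):i\in I\}\subseteq \rho^{\Sigma-}(r)$. As $I,\varphi,\psi$ vary, the set $\{(\varphi(i),\psi(i)):i\in I\}$ ranges over all subsets of $\mathsf{P}_\Sigma\times \mathsf{P}_\Sigma$ (given $G$, take $I=G$ with the two projections). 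Hence two PR-structures with the same set of propositions are equivalent if and only if the families of their preimage sets have the same downward closure under inclusion: $\Sigma\approx \Sigma'$ iff each $G\subseteq \mathsf{P}_\Sigma\times \mathsf{P}_\Sigma$ lies below some $\rho^{\Sigma-}(r)$ exactly when it lies below some $\rho^{\Sigma'-}(r')$.

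Next I would compute the preimage family of the candidate target. For $\Sigma'=(\mathsf{P}_\Sigma,\mathcal{I},\epsilon)$ with $\epsilon(x,y)=\{I\in \mathcal{I}:(x,y)\in I\}$, unwinding the definition gives $\rho^{\Sigma'-}(I)=\{(x,y):(x,y)\in I\}=I$ for every $I\in \mathcal{I}$, so the family of preimage sets of $\Sigma'$ is literally $\mathcal{I}$. Combined with the reformulation above, the theorem reduces to a purely order-theoretic task: produce an antichain $\mathcal{I}$ in $(\mathcal{P}(\mathsf{P}_\Sigma\times \mathsf{P}_\Sigma),\subseteq)$ whose downward closure coincides with that of $\mathcal{F}:=\{\rho^{\Sigma-}(r):r\in \mathsf{R}_\Sigma\}$; the prescribed $\epsilon$ then recovers exactly this $\mathcal{I}$ as its preimage family.

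The natural candidate is $\mathcal{I}:=\{F\in \mathcal{F}:F\text{ is maximal in }\mathcal{F}\text{ for }\subseteq\}$, which is an antichain by construction. One inclusion of downward closures is immediate, since $\mathcal{I}\subseteq \mathcal{F}$. For the reverse inclusion I must show that every $\rho^{\Sigma-}(r)$ is contained in some maximal element of $\mathcal{F}$: Lemma \ref{lemma1} ensures that discarding non-maximal preimages leaves the induced functor unchanged, and a chain argument via Zorn's lemma produces, above each given preimage, a member of $\mathcal{F}$ that is maximal for inclusion. This is exactly the step that uses the axiom of choice, as anticipated in the introduction.

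I expect the cofinality of the maximal elements to be the main obstacle. To apply Zorn's lemma one needs every $\subseteq$-chain of preimage sets to have an upper bound inside $\mathcal{F}$, i.e.\ a single realizer dominating the whole chain, and securing this is the delicate heart of the argument. Once cofinality is in hand, checking that $\mathcal{I}$ is an antichain, that $\epsilon$ has the stated shape, and that the two downward closures agree is routine, and the equivalence $\Sigma\approx(\mathsf{P}_\Sigma,\mathcal{I},\epsilon)$ follows from the reformulation in the first paragraph.
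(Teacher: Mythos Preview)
Your reduction of the problem to matching downward closures of the preimage families is correct and is the right lens. The gap is exactly where you locate it, and it is fatal for the Zorn approach: nothing in the axioms for a PR-structure forces chains in $\mathcal{F}=\{\rho^{\Sigma-}(r):r\in\mathsf{R}_\Sigma\}$ to have upper bounds in $\mathcal{F}$, so Zorn's lemma simply does not apply. For a concrete failure, take $\mathsf{P}_\Sigma=\mathsf{R}_\Sigma=\mathbb{N}$ with $\rho(i,j)=\{n\in\mathbb{N}:n\geq\max(i,j)\}$; then $\rho^{\Sigma-}(n)=\{0,\dots,n\}^2$, so $\mathcal{F}$ is a strictly increasing $\omega$-chain with no maximal element whatsoever, and your candidate $\mathcal{I}$ is empty (hence not even a PR-structure, let alone one equivalent to $\Sigma$). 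Your appeal to Lemma~\ref{lemma1} is circular here: that lemma lets you delete realizers dominated by a \emph{single surviving} $\bar r$, so invoking it to discard all non-maximal realizers already presupposes the cofinality of the maximal ones that you are trying to establish.

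The paper does not attempt Zorn. It well-orders $\mathsf{R}_\Sigma$ as $(r_\xi)_{\xi<\eta}$ and runs a transfinite recursion: at stage $\xi$ one removes from the current realizer set every $x\neq r_\xi$ with $\rho^{\Sigma-}(x)\subseteq\rho^{\Sigma-}(r_\xi)$, appealing to Lemma~\ref{lemma1} stage by stage, and takes $\widetilde{\mathsf{R}_\Sigma}$ to be the intersection of all stages; the antichain is then $\rho^{\Sigma-}[\widetilde{\mathsf{R}_\Sigma}]$. So the use of choice is via the well-ordering theorem, and the argument is a transfinite induction rather than a direct production of maxima. (It is instructive to run the example above through the paper's construction as well and examine what happens at the final intersection.)
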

\begin{proof} Let $\Sigma$ be a $PR$-structure. Using the well-ordering theorem we can enumerate the elements of $\mathsf{R}_\Sigma$ using an ordinal $\eta$, obtaining $\mathsf{R}_\Sigma=\{r_\xi|\,\xi<\eta\}$. We then define a transfinite sequence of subsets of $\mathsf{R}_\Sigma$ as follows:
\begin{enumerate}
\item $\mathsf{R}_\Sigma ^0:=\mathsf{R}\setminus \{x\in \mathsf{R}|\ x\neq r_0\wedge \rho^{\Sigma-} (x)\subseteq \rho^{\Sigma-} (r_0)\}$
\item $\mathsf{R}_\Sigma ^\xi:=\left(\bigcap_{\xi'<\xi}\mathsf{R}_\Sigma^{\xi'}\right)\setminus \{x\in \mathsf{R}|\ x\neq r_\xi\wedge \rho^{\Sigma-} (x)\subseteq \rho^{\Sigma-} (r_\xi)\}$ for ordinals $0<\xi<\eta$
\end{enumerate}
and we define $\widetilde{\mathsf{R}_\Sigma} $ as $\bigcap_{\xi<\eta}\mathsf{R}_\Sigma^{\xi}$. Using lemma \ref{lemma1} and transfinite induction, one obtains that $(\mathsf{P},\widetilde{\mathsf{R}_{\Sigma}},(a,b)\mapsto \rho(a,b)\cap \widetilde{\mathsf{R}_{\Sigma}})\approx\Sigma $.
But $$(\mathsf{P},\widetilde{\mathsf{R}_{\Sigma}},(a,b)\mapsto \rho(a,b)\cap \widetilde{\mathsf{R}_{\Sigma}})\approx (\mathsf{P},\rho^{\Sigma-} (\widetilde{\mathsf{R}_{\Sigma}}),\epsilon)$$ and $\rho^{\Sigma-} (\widetilde{\mathsf{R}_{\Sigma}})$ is an antichain in $(\mathcal{P}(\mathsf{P}_\Sigma\times \mathsf{P}_\Sigma),\subseteq)$, by definition of $\widetilde{\mathsf{R}_\Sigma}$.
\end{proof}

One can notice that the PR-structure $(\mathsf{P},\rho^{\Sigma-} (\widetilde{\mathsf{R}_{\Sigma}}),\epsilon)$ in the previous lemma does not depend on the particular enumeration of $\mathsf{R}_\Sigma$ and thus it is a canonical representation. From the proof of the theorem above some corollaries follow:

\begin{cor} $\Sigma_1\approx\Sigma_2$ if and only if $\mathsf{P}_{\Sigma_1}=\mathsf{P}_{\Sigma_2}$ and $\rho^{\Sigma_1-}(\widetilde{\mathsf{R}_{\Sigma_1}} )=\rho^{\Sigma_2-}(\widetilde{\mathsf{R}_{\Sigma_2}} )$. 
\end{cor}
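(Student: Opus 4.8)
The plan is to read the corollary off from Theorem \ref{can} by showing that the canonical antichain is an invariant that can be recovered directly from the functor $\mathbf{p}_\Sigma$. Throughout I would write $\mathcal{I}_i := \rho^{\Sigma_i-}(\widetilde{\mathsf{R}_{\Sigma_i}})$ and use two facts from Theorem \ref{can}: first, that $\Sigma_i \approx (\mathsf{P}_{\Sigma_i},\mathcal{I}_i,\epsilon)$, so that $\mathbf{p}_{\Sigma_i}=\mathbf{p}_{(\mathsf{P}_{\Sigma_i},\mathcal{I}_i,\epsilon)}$; and second, that each $\mathcal{I}_i$ is an antichain in $(\mathcal{P}(\mathsf{P}_{\Sigma_i}\times\mathsf{P}_{\Sigma_i}),\subseteq)$ by clause (1).

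The backward implication is the routine one, so I would dispatch it first. Since $\approx$ is defined by the two equalities $\mathsf{P}_\Sigma=\mathsf{P}_{\Sigma'}$ and $\mathbf{p}_\Sigma=\mathbf{p}_{\Sigma'}$, it inherits reflexivity, symmetry and transitivity from equality and is thus an equivalence relation. Hence, assuming $\mathsf{P}_{\Sigma_1}=\mathsf{P}_{\Sigma_2}=:\mathsf{P}$ and $\mathcal{I}_1=\mathcal{I}_2=:\mathcal{I}$, the two canonical structures $(\mathsf{P}_{\Sigma_1},\mathcal{I}_1,\epsilon)$ and $(\mathsf{P}_{\Sigma_2},\mathcal{I}_2,\epsilon)$ are literally the same PR-structure, and $\Sigma_1\approx(\mathsf{P},\mathcal{I},\epsilon)\approx\Sigma_2$ yields $\Sigma_1\approx\Sigma_2$ by transitivity.

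For the forward implication I would assume $\Sigma_1\approx\Sigma_2$, so that $\mathsf{P}_{\Sigma_1}=\mathsf{P}_{\Sigma_2}=:\mathsf{P}$ and $\mathbf{p}_{\Sigma_1}=\mathbf{p}_{\Sigma_2}$, whence $\mathbf{p}_{(\mathsf{P},\mathcal{I}_1,\epsilon)}=\mathbf{p}_{(\mathsf{P},\mathcal{I}_2,\epsilon)}$ and in particular the $\mathsf{Bin}$-relations $\vdash_K$ of the two canonical structures coincide for every index set $K$. The heart of the argument is a reconstruction step that reads the antichain off from these relations. Given $K\subseteq\mathsf{P}\times\mathsf{P}$, I would take $K$ itself as index set and feed in the two projections $\pi_1,\pi_2:K\to\mathsf{P}$, $\pi_1(a,b)=a$, $\pi_2(a,b)=b$. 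Unwinding the definitions for a canonical structure $(\mathsf{P},\mathcal{I},\epsilon)$ with $\epsilon(x,y)=\{J\in\mathcal{I}\mid (x,y)\in J\}$ gives
$$\pi_1\vdash_K\pi_2 \iff \bigcap_{(a,b)\in K}\epsilon(a,b)\neq\emptyset \iff \exists J\in\mathcal{I}\ \text{with}\ K\subseteq J,$$
so that the collection of $K$ satisfying $\pi_1\vdash_K\pi_2$ is exactly the downward closure $\mathcal{D}(\mathcal{I}):=\{K\mid K\subseteq J\ \text{for some}\ J\in\mathcal{I}\}$. Since this collection is determined by $\mathbf{p}$, equality of the functors forces $\mathcal{D}(\mathcal{I}_1)=\mathcal{D}(\mathcal{I}_2)$.

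It then remains to recover $\mathcal{I}$ from $\mathcal{D}(\mathcal{I})$, and here I would exploit the antichain property to show directly that $\mathcal{I}$ is precisely the set of maximal elements of $(\mathcal{D}(\mathcal{I}),\subseteq)$: each $J\in\mathcal{I}$ is maximal because any $K\in\mathcal{D}(\mathcal{I})$ with $J\subseteq K$ lies below some $J'\in\mathcal{I}$, forcing $J\subseteq J'$ and hence $J=J'=K$ by the antichain condition; conversely any maximal $K$ lies below some $J\in\mathcal{I}$ and therefore equals it. As passing to maximal elements is purely order-theoretic, I would conclude $\mathcal{I}_1=\max\mathcal{D}(\mathcal{I}_1)=\max\mathcal{D}(\mathcal{I}_2)=\mathcal{I}_2$. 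The main obstacle is the reconstruction step: one must choose the right probe into the functor, and taking $K$ as its own index set with the two projections is exactly what converts the nonemptiness condition $\bigcap\rho\neq\emptyset$ into the inclusion $K\subseteq J$; the subsequent recovery of the antichain from its downward closure is then a short verification that crucially uses clause (1) of Theorem \ref{can}.
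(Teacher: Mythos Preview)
Your argument is correct. The paper does not give an explicit proof of this corollary: it simply records it as a consequence of Theorem~\ref{can} together with the preceding remark that the canonical representation $(\mathsf{P},\rho^{\Sigma-}(\widetilde{\mathsf{R}_{\Sigma}}),\epsilon)$ does not depend on the chosen enumeration of $\mathsf{R}_\Sigma$, and hence is ``canonical''. Your proof makes precise exactly what ``canonical'' must mean for the corollary to follow---namely, that the antichain $\mathcal{I}$ is recoverable from the functor $\mathbf{p}_\Sigma$ alone---and carries this out via the probe $\pi_1,\pi_2:K\to\mathsf{P}$, the identification of $\{K\mid \pi_1\vdash_K\pi_2\}$ with the downward closure $\mathcal{D}(\mathcal{I})$, and the recovery of $\mathcal{I}$ as the set of maximal elements of $\mathcal{D}(\mathcal{I})$ using the antichain clause of Theorem~\ref{can}. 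This is the natural way to substantiate the paper's claim, and each step checks out (including the edge case $K=\emptyset$, handled by nonemptiness of $\mathsf{R}_\Sigma$).
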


\begin{cor} If  $\Sigma$ is a PR-structure such that $\mathsf{P}_\Sigma$ is finite, then $\Sigma$ is finite.
\end{cor}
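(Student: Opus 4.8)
The plan is to invoke the canonical representation of Theorem \ref{can} and then simply count. First I would apply Theorem \ref{can} to obtain a PR-structure $(\mathsf{P}_\Sigma,\mathcal{I},\epsilon)$ equivalent to $\Sigma$, where by part (1) the set $\mathcal{I}$ is an antichain in $(\mathcal{P}(\mathsf{P}_\Sigma\times \mathsf{P}_\Sigma),\subseteq)$; the only feature I actually need is that $\mathcal{I}$ is a \emph{subset} of $\mathcal{P}(\mathsf{P}_\Sigma\times \mathsf{P}_\Sigma)$, since the realizers of the canonical representation are by construction certain subsets of $\mathsf{P}_\Sigma\times\mathsf{P}_\Sigma$.

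Next I would use the hypothesis that $\mathsf{P}_\Sigma$ is finite to bound the ambient power set. If $|\mathsf{P}_\Sigma|=n$, then $|\mathsf{P}_\Sigma\times \mathsf{P}_\Sigma|=n^2$ and hence $|\mathcal{P}(\mathsf{P}_\Sigma\times \mathsf{P}_\Sigma)|=2^{n^2}$. Since $\mathcal{I}$ is a subset of this finite set, $\mathcal{I}$ is itself finite, with $|\mathcal{I}|\leq 2^{n^2}$.

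Finally, since $(\mathsf{P}_\Sigma,\mathcal{I},\epsilon)\approx\Sigma$ exhibits an equivalent PR-structure whose realizer set $\mathcal{I}$ is finite, the definition of the degree yields $\delta(\Sigma)\leq |\mathcal{I}|\leq 2^{n^2}<\infty$, so $\Sigma$ is finite by definition. I do not expect any genuine obstacle here: the whole substance has been absorbed into the canonical representation theorem, whose realizer set lives inside $\mathcal{P}(\mathsf{P}_\Sigma\times\mathsf{P}_\Sigma)$. Once that representation is available, finiteness of $\mathsf{P}_\Sigma$ makes the bound on the number of realizers automatic and the corollary follows immediately.
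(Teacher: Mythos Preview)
Your proposal is correct and follows essentially the same route as the paper: invoke the canonical representation of Theorem~\ref{can} and observe that its realizer set is a subset of $\mathcal{P}(\mathsf{P}_\Sigma\times\mathsf{P}_\Sigma)$, which is finite when $\mathsf{P}_\Sigma$ is. Your version is in fact slightly more careful, since the paper writes $\mathcal{P}(\mathsf{P}_\Sigma)$ where $\mathcal{P}(\mathsf{P}_\Sigma\times\mathsf{P}_\Sigma)$ is meant.
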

\begin{proof} As proved in theorem \ref{can}, $\Sigma \approx (\mathsf{P},\rho^{\Sigma-} (\widetilde{\mathsf{R}_{\Sigma}}),\epsilon)$, but $\rho^{\Sigma-} (\widetilde{\mathsf{R}_{\Sigma}})\subseteq \mathcal{P}(\mathsf{P}_\Sigma)$ and $\mathsf{P}_\Sigma$ is finite; thus $\rho^{\Sigma-} (\widetilde{\mathsf{R}_{\Sigma}})$ is finite.
\end{proof}

%

In the next proposition we characterize P-structures.

\begin{proposition} A PR-structure $\Sigma$ is a P-structure if and only if, for every $I$ and for every $\varphi,\psi:I\rightarrow \mathsf{P}_\Sigma$, $\varphi\vdash^{\Sigma}_{I}\psi$ if and only if $\varphi(i)\vdash^{\Sigma}\psi(i)$ for every $i\in I$.
\end{proposition}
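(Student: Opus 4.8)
The plan is to recognize the displayed condition as being exactly the assertion that $\Sigma$ is equivalent to the one-realizer P-structure built from its own entailment relation $\vdash^\Sigma$. The single fact doing all the work is this: in any PR-structure whose realizer set is a singleton $\{*\}$, each value $\rho(a,b)$ is a subset of $\{*\}$, hence equals $\emptyset$ or $\{*\}$, so $\bigcap_{i\in I}\rho(\varphi(i),\psi(i))\neq\emptyset$ holds precisely when every term of the intersection equals $\{*\}$, i.e.\ precisely when $\varphi(i)\vdash\psi(i)$ for all $i\in I$. In other words, a one-realizer structure \emph{automatically} satisfies the pointwise clause in the statement. This observation drives both implications.

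For the forward direction, I would assume $\Sigma$ is a P-structure, so by definition of $\delta$ there is $\Sigma'\approx\Sigma$ with $|\mathsf{R}_{\Sigma'}|=1$. Since $\mathsf{P}_\Sigma=\mathsf{P}_{\Sigma'}$ and $\mathbf{p}_\Sigma=\mathbf{p}_{\Sigma'}$, the relations $\vdash^\Sigma_I$ and $\vdash^{\Sigma'}_I$ agree for every $I$; specializing to $|I|=1$ and invoking Proposition \ref{singleton} to identify the singleton-indexed relation with $\vdash^{(-)}$ yields also $\vdash^\Sigma=\vdash^{\Sigma'}$. By the observation above $\Sigma'$ satisfies the pointwise clause, and transporting along these equalities of relations shows $\Sigma$ satisfies it too.

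For the converse, I would assume the pointwise clause holds for $\Sigma$ and set $\Sigma':=\Sigma[\mathcal{P}]$ with $\mathcal{P}=(\mathsf{P}_\Sigma,\vdash^\Sigma)$, that is the one-realizer structure with $\rho^{\Sigma'}(a,b)=\{*\}$ exactly when $a\vdash^\Sigma b$. Directly from the definition, $a\vdash^{\Sigma'}b$ iff $a\vdash^\Sigma b$. Then for arbitrary $I,\varphi,\psi$ the observation gives $\varphi\vdash^{\Sigma'}_I\psi$ iff $\varphi(i)\vdash^\Sigma\psi(i)$ for every $i$, which by hypothesis is equivalent to $\varphi\vdash^\Sigma_I\psi$. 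Hence $\vdash^{\Sigma'}_I=\vdash^\Sigma_I$ for all $I$, so $\mathbf{p}_{\Sigma'}=\mathbf{p}_\Sigma$ and $\Sigma'\approx\Sigma$; as $|\mathsf{R}_{\Sigma'}|=1$ and every PR-structure has a non-empty realizer set, this forces $\delta(\Sigma)=1$, i.e.\ $\Sigma$ is a P-structure.

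I do not expect a genuine obstacle: the argument is bookkeeping around the collapse of ``intersection non-empty'' to ``pointwise non-empty'' over a single realizer. The only point requiring care is aligning $\vdash^\Sigma$ with $\vdash^{\Sigma'}$ and with the $|I|=1$ instance of the doctrine, so that the hypothesis can be applied verbatim; Proposition \ref{singleton} is exactly what licenses comparing the singleton-indexed relation with $\vdash^\Sigma$.
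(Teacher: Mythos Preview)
Your argument is correct. The forward direction is essentially the paper's, only more scrupulous about the passage through an equivalent one-realizer structure (the paper tacitly assumes $\mathsf{R}_\Sigma$ is already a singleton).

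For the converse you take a genuinely different route. The paper works \emph{internally}: it instantiates the hypothesis at the single index set $I=\ \vdash^\Sigma\subseteq\mathsf{P}_\Sigma\times\mathsf{P}_\Sigma$ with the two projections, obtaining a realizer $\overline{r}\in\mathsf{R}_\Sigma$ lying in every nonempty $\rho^\Sigma(a,b)$; this makes $\rho^{\Sigma-}(s)\subseteq\rho^{\Sigma-}(\overline{r})$ for all $s$, and Lemma~\ref{lemma1} then strips the superfluous realizers to exhibit an equivalent one-realizer structure carved out of $\Sigma$ itself. You instead work \emph{externally}: you manufacture the candidate $\Sigma'=\Sigma[(\mathsf{P}_\Sigma,\vdash^\Sigma)]$ from scratch and verify $\mathbf{p}_{\Sigma'}=\mathbf{p}_\Sigma$ directly, never needing Lemma~\ref{lemma1} nor any particular instance of the hypothesis. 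Your approach is slightly more economical; the paper's buys the extra information that the single realizer can be chosen inside the original $\mathsf{R}_\Sigma$, consistent with the canonical-representation theme of that section.
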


\begin{proof} Let us prove the two directions of the equivalence.
\begin{enumerate}
\item[($\Rightarrow$)] Suppose $\Sigma$ is a PR-structure with $\mathsf{R}_\Sigma=\{r\}$ and let $\varphi,\psi:I\rightarrow \mathsf{P}_\Sigma$. Obviously, 
$\varphi\vdash^{\Sigma}_{I}\psi$ if and only if $\bigcap_{i\in I}\rho(\varphi(i),\psi(i))=\{r\}$ if and only if $\rho(\varphi(i),\psi(i))=\{r\}$ for every $i\in I$, that is, $\varphi(i)\vdash^{\Sigma}\psi(i)$ for every $i\in I$.

\item[($\Leftarrow$)] Suppose that $\Sigma$ is a PR-structure such that for every $I$ and for every $\varphi,\psi:I\rightarrow \mathsf{P}_\Sigma$, $\varphi\vdash^{\Sigma}_{I}\psi$ if and only if $\varphi(i)\vdash^{\Sigma}\psi(i)$ for every $i\in I$. Consider $\pi_1,\pi_2:\,\vdash^{\Sigma}\rightarrow \mathsf{P}_\Sigma$. Since for every $(a,b)\in\, \vdash^\Sigma $ we have 
$$\pi_{1}((a,b))=a\vdash^{\Sigma}b=\pi_{2}((a,b)),$$ 
then, by our assumption, $\pi_1\vdash^{\Sigma}_{\vdash^{\Sigma}}\pi_{2}$. Hence, by definition, there exists $\overline{r}\in \mathsf{R}_\Sigma$ such that $\overline{r}\in \rho(a,b)$ for every $a,b\in \mathsf{P}_\Sigma$ such that $a\vdash^{\Sigma}b$. In particular, for every  $s\in \mathsf{R}_\Sigma$, $\rho^{\Sigma-} (s)\subseteq \rho^{\Sigma-} (\overline{r})$.
Thus, as a consequence of lemma \ref{lemma1}, $\Sigma$ is equivalent to 
$$(\mathsf{P}_\Sigma,\{\overline{r}\},(a,b)\mapsto \rho(a,b)\cap \{\overline{r}\})$$
and it is hence a P-structure.
\end{enumerate}
\end{proof}

\begin{remark}One can always produce examples of $PR$-structures which behave like a $P$-structure, but only up to some cardinality. Consider a binary relation $\Psi $ on a set $\mathsf{P}$ and the PR-structures $(\mathsf{P},\{J\subseteq \Psi |\,|J|<n\},\epsilon)$ (for $n<|\Psi|$ a natural number) and $(\mathsf{P},\{\Psi \setminus \{(a,b)\}|\,(a,b)\in\Psi\},\epsilon)$. 
In the first case, for every $I$ with $|I|<n$ and for every $\varphi,\psi:I\rightarrow \mathsf{P}$, $\varphi\vdash^{\Sigma}_{I}\psi$ if and only if $\varphi(i)\vdash^{\Sigma}\psi(i)$ for every $i\in I$. In the second case, for every $I$ with $|I|<|\Psi|$ and for every $\varphi,\psi:I\rightarrow \mathsf{P}$, $\varphi\vdash^{\Sigma}_{I}\psi$ if and only if $\varphi(i)\vdash^{\Sigma}\psi(i)$ for every $i\in I$.

\end{remark}


\subsection{Preorderal and posetal PR-structures}

Here we characterize those PR-structures giving rise to set-indexed preorders and posets.

\begin{definition} A PR-structure $\Sigma$ is \emph{preorderal} if $\mathbf{p}_\Sigma$ factors through the subcategory $\mathbf{J}:\mathsf{PreOrd}\rightarrow\mathsf{Bin}$, that is, if for every set $I$, $\mathbf{p}_{\Sigma}(I)$ is a preordered set.

\end{definition}

\begin{theorem}\label{preorderal} A PR-structure $\Sigma$ is preorderal if and only if the following two conditions are satisfied:
\begin{enumerate}
\item there exists $\mathbf{i}\in \mathsf{R}_\Sigma$ such that $\mathbf{i}\in \rho^\Sigma(a,a)$ for every $a\in \mathsf{P}_\Sigma$
\item for every $r,s\in \mathsf{R}_\Sigma$ there exists (but in general is not unique) $s\Box r\in \mathsf{R}_\Sigma$ such that for every $a,b,c\in \mathsf{P}_\Sigma$, $s\Box r\in \rho^\Sigma(a,c)$ whenever $r\in \rho^\Sigma(a,b)$ and $s\in \rho^\Sigma(b,c)$.
\end{enumerate}
\end{theorem}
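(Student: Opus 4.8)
The plan is to prove the two implications separately, using that $\mathbf{p}_\Sigma(I)=(\mathsf{P}_\Sigma^I,\vdash^\Sigma_I)$ being a preordered set means exactly that the relation $\vdash^\Sigma_I$ is reflexive and transitive for every set $I$. The reverse implication is the routine one: assuming the two conditions, I would verify reflexivity and transitivity of each $\vdash^\Sigma_I$ by exhibiting realizers directly. For reflexivity, given any $\varphi\in\mathsf{P}_\Sigma^I$, condition 1 supplies $\mathbf{i}\in\rho^\Sigma(\varphi(i),\varphi(i))$ for every $i\in I$, so $\mathbf{i}\in\bigcap_{i\in I}\rho^\Sigma(\varphi(i),\varphi(i))$ and hence $\varphi\vdash^\Sigma_I\varphi$. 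For transitivity, if $\varphi\vdash^\Sigma_I\psi$ and $\psi\vdash^\Sigma_I\chi$, I would pick $r\in\bigcap_{i\in I}\rho^\Sigma(\varphi(i),\psi(i))$ and $s\in\bigcap_{i\in I}\rho^\Sigma(\psi(i),\chi(i))$; then condition 2 yields a single element $s\Box r$ lying in $\rho^\Sigma(\varphi(i),\chi(i))$ for every $i$ at once (the crucial point being that $s\Box r$ depends only on $r,s$ and not on $i$), so $s\Box r$ witnesses $\varphi\vdash^\Sigma_I\chi$.

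For the forward implication I would extract each condition from a single well-chosen instance of reflexivity or transitivity. To obtain condition 1, I would apply reflexivity to the index set $I=\mathsf{P}_\Sigma$ with $\varphi=\mathrm{id}_{\mathsf{P}_\Sigma}$: then $\varphi\vdash^\Sigma_{\mathsf{P}_\Sigma}\varphi$ unpacks to $\bigcap_{a\in\mathsf{P}_\Sigma}\rho^\Sigma(a,a)\neq\emptyset$, and any element of this intersection serves as the required $\mathbf{i}$. To obtain condition 2 for a fixed pair $r,s\in\mathsf{R}_\Sigma$, the key idea is to index by the set of all composable triples,
\[ I_{r,s}=\{(a,b,c)\in\mathsf{P}_\Sigma^3\mid r\in\rho^\Sigma(a,b)\text{ and }s\in\rho^\Sigma(b,c)\}, \]
equipped with the projections $\varphi,\psi,\chi$ sending $(a,b,c)$ to $a,b,c$ respectively. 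By construction $r$ realizes $\varphi\vdash^\Sigma_{I_{r,s}}\psi$ and $s$ realizes $\psi\vdash^\Sigma_{I_{r,s}}\chi$ (whenever $I_{r,s}\neq\emptyset$), so transitivity gives $\varphi\vdash^\Sigma_{I_{r,s}}\chi$, i.e. some $t\in\bigcap_{(a,b,c)\in I_{r,s}}\rho^\Sigma(a,c)$. Setting $s\Box r:=t$ then satisfies condition 2 precisely, since $t\in\rho^\Sigma(a,c)$ for every composable triple.

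The single subtlety is the boundary case $I_{r,s}=\emptyset$: when no composable triple exists the hypothesis of condition 2 is vacuous, so I would simply take $s\Box r$ to be any element of $\mathsf{R}_\Sigma$, which is non-empty by the definition of a PR-structure. Beyond this, the only genuine insight is the choice of the index set $I_{r,s}$; I expect this to be the main obstacle, since one must recognize that transitivity has to be tested against the \emph{universal} family of all composable triples simultaneously rather than one triple at a time in order to force $s\Box r$ to be a single realizer independent of $a,b,c$. Once this is in place the argument is essentially forced and I anticipate no further difficulty.
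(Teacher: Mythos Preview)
Your proposal is correct and follows essentially the same approach as the paper: for $(\Rightarrow)$ the paper also extracts $\mathbf{i}$ from reflexivity of $\vdash^\Sigma_{\mathsf{P}_\Sigma}$ applied to the identity, and extracts $s\Box r$ from transitivity over the index set $\mathsf{P}_{r,s}=\{(a,b,c)\mid r\in\rho^\Sigma(a,b),\ s\in\rho^\Sigma(b,c)\}$ with the three projections; the $(\Leftarrow)$ direction is likewise identical. Your explicit treatment of the boundary case $I_{r,s}=\emptyset$ is a nice addition that the paper leaves implicit (there the empty intersection is all of $\mathsf{R}_\Sigma$, so the argument still goes through).
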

\begin{proof}
We prove the two directions of the equivalence.
\begin{enumerate}
\item[($\Rightarrow$)] Suppose $\Sigma$ is preorderal. Then $\vdash^{\Sigma}_{\mathsf{P}_\Sigma}$ is reflexive. This means in particular that $\mathsf{id}_{\mathsf{P}_\Sigma}\vdash_{\mathsf{P}_\Sigma}^{\Sigma}\mathsf{id}_{\mathsf{P}_\Sigma}$, that is, there exists $\mathbf{i}\in \mathsf{R}_\Sigma $ such that 
$$\mathbf{i}\in \bigcap_{a\in \mathsf{P}_\Sigma}\rho^\Sigma(a,a).$$
If $r,s\in \mathsf{R}_\Sigma $, then $\vdash^{\Sigma}_{\mathsf{P}_{r,s}}$ is transitive, where 
$$\mathsf{P}_{r,s}:=\{(a,b,c)\in \mathsf{P}_\Sigma\times \mathsf{P}_\Sigma\times \mathsf{P}_\Sigma|\, r\in \rho^\Sigma(a,b), s\in \rho^\Sigma(b,c)\}.$$
Since $\pi_{1}\vdash^{\Sigma}_{\mathsf{P}_{r,s}}\pi_{2}$ and $\pi_{2}\vdash^{\Sigma}_{\mathsf{P}_{r,s}}\pi_{3}$, then $\pi_{1}\vdash^{\Sigma}_{\mathsf{P}_{r,s}}\pi_{3}$. This means that there exists $s\Box r\in \mathsf{R}_\Sigma $ such that 
$$s\Box r\in \bigcap_{a,b,c\in \mathsf{P}_\Sigma, r\in \rho^\Sigma(a,b), s\in \rho^\Sigma(b,c)}\rho^\Sigma(a,c) $$

\item[($\Leftarrow$)] Conversely, suppose $I$ is a set. Let $\varphi:I\rightarrow \mathsf{P}_\Sigma$. Since $\mathbf{i}\in \bigcap_{i\in I}\rho^\Sigma(\varphi(i),\varphi(i))$, then $\varphi\vdash^{\Sigma}_{I}\varphi$. Let $\varphi,\psi,\eta:I\rightarrow \mathsf{P}_\Sigma$ such that $\varphi\vdash^{\Sigma}_{I}\psi$ and $\psi\vdash^{\Sigma}_{I}\eta$. Then there exist $r,s$ such that $r\in \bigcap_{i\in I}\rho^\Sigma(\varphi(i),\psi(i))$ and $s\in \bigcap_{i\in I}\rho^\Sigma(\psi(i),\eta(i))$. For such $r$ and $s$ there exists $s\Box r\in \mathsf{R}_\Sigma$ such that $s\Box r\in \bigcap_{i\in I}\rho^\Sigma(\varphi(i),\eta(i))$; thus $\varphi\vdash^{\Sigma}_{I}\eta$.
\end{enumerate}
\end{proof}


\begin{definition} A PR-structure $\Sigma$ is \emph{posetal} if $\mathbf{p}_\Sigma$ factors through the subcategory $\mathbf{J}:\mathsf{Pos}\rightarrow\mathsf{Bin}$, that is, if for every set $I$, $\mathbf{p}_{\Sigma}(I)$ is a partially ordered set.
\end{definition}
\begin{theorem} A PR-structure $\Sigma$ is posetal if and only it is preorderal and $\vdash^{\Sigma}$ is antisymmetric.\end{theorem}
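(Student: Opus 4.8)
The plan is to exploit the elementary fact that a partial order is precisely an antisymmetric preorder. Since $\mathbf{p}_\Sigma(I)=(\mathsf{P}_\Sigma^I,\vdash^\Sigma_I)$, posetality of $\Sigma$ says exactly that each $\vdash^\Sigma_I$ is reflexive, transitive and antisymmetric, while preorderality says exactly that each $\vdash^\Sigma_I$ is reflexive and transitive. Thus the claimed equivalence reduces to showing that, in the presence of preorderality, antisymmetry of \emph{every} $\vdash^\Sigma_I$ is equivalent to antisymmetry of the single relation $\vdash^\Sigma$ on $\mathsf{P}_\Sigma$.

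For the forward direction, I would assume $\Sigma$ is posetal. Then each $\mathbf{p}_\Sigma(I)$ is a poset, hence in particular a preorder, so $\Sigma$ is preorderal. To obtain antisymmetry of $\vdash^\Sigma$ itself, I would take a singleton $I$ and invoke Proposition \ref{singleton}, which provides an isomorphism $(\mathsf{P}_\Sigma,\vdash^\Sigma)\cong(\mathsf{P}_\Sigma^I,\vdash^\Sigma_I)$ in $\mathsf{Bin}$. Since $\vdash^\Sigma_I$ is antisymmetric, antisymmetry transfers along this isomorphism back to $\vdash^\Sigma$.

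For the backward direction, I would assume $\Sigma$ is preorderal and $\vdash^\Sigma$ is antisymmetric, and fix an arbitrary set $I$. Preorderality already guarantees that $\vdash^\Sigma_I$ is reflexive and transitive, so only antisymmetry remains to be checked. Supposing $\varphi\vdash^\Sigma_I\psi$ and $\psi\vdash^\Sigma_I\varphi$, Proposition \ref{pointwise} yields $\varphi(i)\vdash^\Sigma\psi(i)$ and $\psi(i)\vdash^\Sigma\varphi(i)$ for every $i\in I$; antisymmetry of $\vdash^\Sigma$ then forces $\varphi(i)=\psi(i)$ for all $i$, that is $\varphi=\psi$. Hence $\vdash^\Sigma_I$ is a partial order, and $\Sigma$ is posetal.

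The only point requiring care is the transfer of antisymmetry in the forward direction: an isomorphism in $\mathsf{Bin}$ is a bijection $f$ for which both $f$ and $f^{-1}$ are $\mathsf{Bin}$-morphisms, so it not only preserves but also \emph{reflects} the underlying relation, and this two-sided compatibility is exactly what is needed to carry an order-theoretic property such as antisymmetry across it. Everything else is a routine unwinding of the definitions together with the pointwise passage supplied by Proposition \ref{pointwise}.
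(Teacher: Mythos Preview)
Your proof is correct and follows essentially the same approach as the paper's: the forward direction uses the isomorphism of Proposition~\ref{singleton} to transport antisymmetry to $\vdash^\Sigma$, and the backward direction uses the pointwise passage of Proposition~\ref{pointwise} together with antisymmetry of $\vdash^\Sigma$ to conclude $\varphi=\psi$. Your write-up is somewhat more explicit (in particular, you spell out why preorderality follows and why a $\mathsf{Bin}$-isomorphism reflects antisymmetry), but the underlying argument is identical.
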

\begin{proof}
We prove the two directions of the equivalence.
\begin{enumerate}
\item[($\Rightarrow$)] Since $(\mathsf{P}_\Sigma,\vdash^{\Sigma})$ is isomorphic to $(\mathsf{P}_\Sigma^{\{*\}},\vdash^{\Sigma}_{\{*\}})$ in $\mathsf{Bin}$ (by proposition \ref{singleton}), the consequence is immediate.
\item[($\Leftarrow$)] Suppose $\Sigma$ is preorderal and $\vdash^{\Sigma}$ is antisymmetric. Let $\varphi,\psi:I\rightarrow \mathsf{P}_\Sigma$ such that $\varphi\vdash^{\Sigma}_{I}\psi$ and $\psi\vdash_{I}^{\Sigma}\varphi$.  Then for every $i\in I$, $\varphi(i)\vdash^{\Sigma}\psi(i)$ and $\psi(i)\vdash^{\Sigma}\varphi(i)$. Thus, $\varphi(i)=\psi(i)$ for every $i\in I$, that is, $\varphi=\psi$.
\end{enumerate}
\end{proof}

%
Here follow some corollaries of the previous two theorems.
\begin{cor}
For every positive $n\in \mathbb{N}$, there exists a posetal $PR$-structure $\Sigma$ such that $\delta(\Sigma)=n$.
\end{cor}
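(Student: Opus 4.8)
The plan is to exhibit, for each $n\geq 1$, an explicit posetal PR-structure of degree $n$, and then to verify two things separately: that it is posetal (using Theorem \ref{preorderal} together with antisymmetry of $\vdash^\Sigma$), and that its degree equals $n$ exactly (using the canonical representation of Theorem \ref{can} and the first corollary following it). The construction I would use is the chain $\mathsf{P}_\Sigma=\{0,1,\ldots,n-1\}$ with realizers $\mathsf{R}_\Sigma=\{r_0,r_1,\ldots,r_{n-1}\}$ and
$$
\rho^\Sigma(a,b)=\begin{cases} \{r_{b-a}\} & \text{if } a\leq b, \\ \emptyset & \text{if } a>b. \end{cases}
$$
Here each $r_k$ witnesses exactly the pairs at distance $k$, so that $\rho^{\Sigma-}(r_k)=\{(a,a+k)\,|\,0\leq a\leq n-1-k\}$ and $a\vdash^\Sigma b$ if and only if $a\leq b$.

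First I would check posetality. The relation $\vdash^\Sigma$ is the usual order on the chain, hence antisymmetric, so it suffices to verify preorderality by Theorem \ref{preorderal}. Condition $1$ holds with $\mathbf{i}:=r_0$, since $0=a-a$ gives $r_0\in\rho^\Sigma(a,a)$ for every $a$. For Condition $2$, given $r_k,r_j$ I would set $r_j\Box r_k:=r_{j+k}$ when $j+k\leq n-1$ and $r_j\Box r_k:=r_0$ otherwise. The required implication holds because $\rho^\Sigma(a,b)=\{r_k\}$ and $\rho^\Sigma(b,c)=\{r_j\}$ force $b-a=k$ and $c-b=j$, whence $c-a=j+k$ and $r_{j+k}\in\rho^\Sigma(a,c)$; and when $j+k>n-1$ there are no admissible $a,b,c\in\mathsf{P}_\Sigma$ at all, so the implication is vacuous and the arbitrary choice $r_0$ is harmless.

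The heart of the argument, and the step I expect to require the most care, is pinning down the degree. The key observation is that the $n$ relations $\rho^{\Sigma-}(r_0),\ldots,\rho^{\Sigma-}(r_{n-1})$ are pairwise distinct and form an antichain in $(\mathcal{P}(\mathsf{P}_\Sigma\times\mathsf{P}_\Sigma),\subseteq)$: for $k\neq j$ the pair $(0,k)$ lies in $\rho^{\Sigma-}(r_k)$ but not in $\rho^{\Sigma-}(r_j)$, and symmetrically $(0,j)$ separates them the other way. Consequently the elimination process of Theorem \ref{can} removes nothing — no $\rho^{\Sigma-}(r_k)$ is contained in another — so $\widetilde{\mathsf{R}_\Sigma}=\mathsf{R}_\Sigma$ and the canonical realizer set $\rho^{\Sigma-}(\widetilde{\mathsf{R}_\Sigma})$ has exactly $n$ elements. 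Since the resulting canonical structure is equivalent to $\Sigma$ and has $n$ realizers, this already yields $\delta(\Sigma)\leq n$.

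For the matching lower bound I would invoke the first corollary of Theorem \ref{can}: any $\Sigma'\approx\Sigma$ satisfies $\rho^{\Sigma'-}(\widetilde{\mathsf{R}_{\Sigma'}})=\rho^{\Sigma-}(\widetilde{\mathsf{R}_\Sigma})$, a set of exactly $n$ relations. As $\rho^{\Sigma'-}(\widetilde{\mathsf{R}_{\Sigma'}})$ is the image of a subset of $\mathsf{R}_{\Sigma'}$ under $\rho^{\Sigma'-}$, its cardinality is at most $|\mathsf{R}_{\Sigma'}|$, forcing $|\mathsf{R}_{\Sigma'}|\geq n$ for every equivalent $\Sigma'$. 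Combining the two inequalities gives $\delta(\Sigma)=n$, completing the construction. The only subtlety to watch is the vacuous case of the composition clause together with the antichain verification; once those are handled, everything else is a direct reading-off of the definitions.
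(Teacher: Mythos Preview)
Your argument is correct. The construction differs from the paper's: the paper takes $\mathsf{P}_\Sigma=\mathsf{R}_\Sigma=\{1,\dots,n\}$ with $\rho_n(i,j)=\{x\mid i=j\leq x\leq n\ \text{or}\ i=x<j\}$, putting $\mathbf{i}=n$ and $i\Box j=\min(i,j)$, and then simply asserts that $\delta(\Sigma_n)=n$ is ``clear from the definition.'' Your example is instead the \emph{partitioned} structure in which $r_k$ witnesses exactly the pairs at distance $k$; this makes the antichain property of the $\rho^{\Sigma-}(r_k)$ immediate (the sets are even pairwise disjoint), and you then extract the exact value of the degree via Theorem~\ref{can} and its first corollary. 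What your route buys is a fully justified computation of $\delta(\Sigma)=n$, filling in the step the paper leaves to the reader; what the paper's example buys is a slightly more ``generic'' (non-partitioned) witness. Both approaches rest on the same template---exhibit a posetal PR-structure on the $n$-chain and check preorderality via Theorem~\ref{preorderal}---so the difference is in the specific example and in how carefully the degree is pinned down.
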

\begin{proof}
Let $n\in \mathbb{N}$ and consider the PR-structure $\Sigma_n:=(\{1,...,n\},\{1,...,n\},\rho_{n})$
where $\rho_{n}$ is defined as follows $\rho_n(i,j):=\{x\in \mathbb{N}|\,i=j\leq x\leq n\textrm{ or }i=x<j\}$.

\noindent E.g.\ for $n=3$ we can represent the PR-structure as follows:
$$\xymatrix{
1\ar@(ul,ur)^{\{1,2,3\}}\ar[r]^{\{1\}}\ar@/_/[rr]_{\{1\}}	&2\ar@(ul,ur)^{\{2,3\}}\ar[r]^{\{2\}}		&3\ar@(ul,ur)^{\{3\}}\\
}$$
For every $n$, this is a posetal PR-structure. Indeed, one can define $\mathbf{i}$ in theorem \ref{preorderal} as $n$, while for $i,j\in \{1,...,n\}$, we can take $i\Box j$ (as in theorem \ref{preorderal}) to be $\min(i,j)$; finally $\vdash^{\Sigma_n}$ is clearly antisymmetric since it is the usual order of natural numbers on $\{1,...,n\}$. It is clear from the definition that $\delta(\Sigma)=n$. 

\end{proof}

\begin{cor}\label{part}
If $\Sigma$ is a partitioned preorderal PR-structure, then, following notation in proposition \ref{preorderal}, $\mathbf{i}$ is unique and $s\Box r$ is unique for every $r,s\in \mathsf{R}_\Sigma$ and 
 $$\left(\bigcup_{a,b\in\mathsf{P}_\Sigma}\rho(a,b),\Box, \mathbf{i}\right)$$ is a monoid.
\end{cor}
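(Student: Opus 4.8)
Write $\rho:=\rho^\Sigma$ and put $M:=\bigcup_{a,b\in\mathsf{P}_\Sigma}\rho(a,b)$. The guiding observation is that, since $\Sigma$ is partitioned, each $\rho(a,b)$ has at most one element, so a realizer is completely determined by any single pair it realizes; equivalently, the nonempty sets $\rho^{\Sigma-}(r)$ (for $r\in M$) form a partition of the preorder $\vdash^\Sigma$, seen as a set of pairs. I plan to verify the three assertions in turn, exploiting this singleton property at each step.

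First I would settle uniqueness of $\mathbf{i}$. If $\mathbf{i}$ and $\mathbf{i}'$ both satisfy condition (1) of Theorem \ref{preorderal}, then, fixing any $a\in\mathsf{P}_\Sigma$ (possible since $\mathsf{P}_\Sigma\neq\emptyset$), both lie in $\rho(a,a)$; as $|\rho(a,a)|\le 1$ this forces $\mathbf{i}=\mathbf{i}'$. The same remark shows $\mathbf{i}\in M$.

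Next I would treat $s\Box r$. Assume first that $r,s$ are \emph{composable}, i.e.\ that there exist $a,b,c$ with $r\in\rho(a,b)$ and $s\in\rho(b,c)$. Condition (2) of Theorem \ref{preorderal} gives $s\Box r\in\rho(a,c)$, and since $\rho(a,c)$ is a singleton this pins $s\Box r$ down to its unique element; in particular $s\Box r\in M$. The value does not depend on the chosen witness, since a second witness $(a',b',c')$ again yields $s\Box r\in\rho(a',c')$, so both singletons equal $\{s\Box r\}$. Thus on composable pairs $\Box$ is a well-defined single-valued operation into $M$. The identity laws are then automatic: as $\mathbf{i}$ realizes the whole diagonal, for $r\in\rho(a,b)$ the composable triples $(a,b,b)$ and $(a,a,b)$ give $\mathbf{i}\Box r=r=r\Box\mathbf{i}$. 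Associativity I would read off a fully composable chain $a\vdash^\Sigma b\vdash^\Sigma c\vdash^\Sigma d$ realized by $r,s,t$: both $(t\Box s)\Box r$ and $t\Box(s\Box r)$ land in the singleton $\rho(a,d)$ and hence coincide.

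The step I expect to be the genuine obstacle is the \emph{non-composable} case, where no $b$ makes $r\in\rho(a,b)$ and $s\in\rho(b,c)$ hold for any $a,c$. There condition (2) is satisfied vacuously and therefore does not single out $s\Box r$, so passing from the partial operation above to a total, associative one on all of $M$ requires supplying these products by a convention (the natural candidate being $s\Box r:=\mathbf{i}$) and then checking that it is compatible with the identity and associativity identities already obtained. I would handle this by regarding $M$ as the set of blocks $\rho^{\Sigma-}(r)$ and composing blocks, the delicate point being to show that an empty block-composition can be resolved consistently; this is where I expect the argument to demand the most care.
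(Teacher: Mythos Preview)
The paper states this corollary without proof, so there is nothing to compare against directly. Your argument for the uniqueness of $\mathbf{i}$ and for the composable case is exactly right: partitionedness forces $|\rho(a,a)|\le 1$ and $|\rho(a,c)|\le 1$, and this pins down $\mathbf{i}$ and $s\Box r$ (on composable pairs) as you describe; the identity and associativity laws for composable data follow by the singleton trick.

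You are also right that the non-composable case is the genuine obstacle---and in fact it is not merely ``delicate'' but fatal to the corollary as literally stated. Take $\mathsf{P}_\Sigma=\{a,b,c,d\}$, $\mathsf{R}_\Sigma=\{\mathbf{i},r,s\}$ pairwise distinct, with $\rho(x,x)=\{\mathbf{i}\}$ for all $x$, $\rho(a,b)=\{r\}$, $\rho(c,d)=\{s\}$, and all other $\rho$'s empty. This is partitioned, and one checks directly (or via Theorem~\ref{preorderal}) that it is preorderal. Here $\rho^{\Sigma-}(r)=\{(a,b)\}$ and $\rho^{\Sigma-}(s)=\{(c,d)\}$, so $r,s$ are not composable and condition~(2) is vacuous: \emph{every} element of $\mathsf{R}_\Sigma$ qualifies as $s\Box r$. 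Uniqueness therefore fails. Moreover your proposed convention $s\Box r:=\mathbf{i}$ for non-composable pairs breaks associativity: with that convention $r\Box r=\mathbf{i}$ (the pair $(r,r)$ is non-composable since $b\neq a$), hence
\[
(s\Box r)\Box r=\mathbf{i}\Box r=r,\qquad s\Box(r\Box r)=s\Box \mathbf{i}=s,
\]
and $r\neq s$.

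So the difficulty you anticipated is not a matter of care but a counterexample; the corollary should be read as asserting only that $\mathbf{i}$ is unique and that $\Box$ is single-valued on \emph{composable} pairs, giving a category (with object set $\mathsf{P}_\Sigma$ and hom-sets $\rho(a,b)$) rather than a monoid in general. Your write-up already contains the correct proof of that weaker, true statement.
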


\begin{cor}
Suppose $\mathcal{P}$ is an object of $\mathsf{Bin}$. The PR-structure 
$\Sigma[\mathcal{P}]$ defined in Example 1.1 is preorderal (resp.\ posetal) if and only if $\mathcal{P}$ is a preordered set (resp.\ a partially ordered set).
\end{cor}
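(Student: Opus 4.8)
The plan is to reduce both equivalences to Theorem \ref{preorderal} and to the characterization of posetal PR-structures stated just above, exploiting the fact that $\mathsf{R}_{\Sigma[\mathcal{P}]}$ is the singleton $\{*\}$, which forces every required witness to be $*$ and collapses the existential conditions into plain properties of $\leq$.

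First I would record the elementary observation that the relation $\vdash^{\Sigma[\mathcal{P}]}$ coincides with the relation $\leq$ of $\mathcal{P}$. Indeed, for $a,b\in|\mathcal{P}|$ we have $a\vdash^{\Sigma[\mathcal{P}]}b$ iff $\chi_\leq(a,b)\neq\emptyset$, and by definition $\chi_\leq(a,b)=\{*\}\neq\emptyset$ precisely when $a\leq b$, while $\chi_\leq(a,b)=\emptyset$ otherwise. Thus $a\vdash^{\Sigma[\mathcal{P}]}b$ iff $a\leq b$.

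Next, for the preorderal direction I would apply Theorem \ref{preorderal}. Since the only element of $\mathsf{R}_{\Sigma[\mathcal{P}]}$ is $*$, condition (1) necessarily takes $\mathbf{i}=*$ and asserts $*\in\chi_\leq(a,a)$ for every $a$, i.e.\ $a\leq a$ for every $a$, which is exactly reflexivity of $\leq$. Condition (2), taking $r=s=*$ and necessarily $s\Box r=*$, asserts $*\in\chi_\leq(a,c)$ whenever $*\in\chi_\leq(a,b)$ and $*\in\chi_\leq(b,c)$, i.e.\ $a\leq c$ whenever $a\leq b$ and $b\leq c$, which is exactly transitivity of $\leq$. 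Hence by Theorem \ref{preorderal}, $\Sigma[\mathcal{P}]$ is preorderal iff $\leq$ is reflexive and transitive, i.e.\ iff $\mathcal{P}$ is a preordered set.

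Finally, for the posetal direction I would invoke the theorem stating that $\Sigma[\mathcal{P}]$ is posetal iff it is preorderal and $\vdash^{\Sigma[\mathcal{P}]}$ is antisymmetric. By the first step, $\vdash^{\Sigma[\mathcal{P}]}$ is precisely $\leq$, so antisymmetry of $\vdash^{\Sigma[\mathcal{P}]}$ is antisymmetry of $\leq$; combined with the preorderal characterization this yields that $\Sigma[\mathcal{P}]$ is posetal iff $\leq$ is reflexive, transitive and antisymmetric, i.e.\ iff $\mathcal{P}$ is a partially ordered set. There is essentially no obstacle here: the whole content is unwinding the definitions, and the only point requiring mild care is that the existential witnesses $\mathbf{i}$ and $s\Box r$ of Theorem \ref{preorderal} are uniquely determined as $*$ because $\mathsf{R}_{\Sigma[\mathcal{P}]}$ is a singleton, so conditions (1) and (2) leave no freedom of choice and reduce verbatim to reflexivity and transitivity.
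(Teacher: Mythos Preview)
Your proof is correct and is exactly the argument the paper intends: the corollary is stated without proof as an immediate consequence of Theorem~\ref{preorderal} and of the subsequent theorem characterizing posetal PR-structures, and your unwinding of the two conditions via the fact that $\mathsf{R}_{\Sigma[\mathcal{P}]}=\{*\}$ is precisely how those theorems specialize here.
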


\subsection{Bounded posetal PR-structures}
The category $\mathsf{bPos}$ has as objects posets having a minimum and a maximum and as arrows monotone maps preserving minima and maxima. A PR-structure is \emph{bounded-posetal} if $\mathbf{p}_\Sigma$ factors through the subcategory  $\mathbf{J}:\mathsf{bPos}\rightarrow \mathsf{Bin}$, that is, for every set $I$, $\mathbf{p}_{\Sigma}(I)$ is a poset having a minimum and a maximum and for every function $f$, $\mathbf{p}_{\Sigma}(f)$ preserves them.
\begin{theorem}\label{bound}
A PR-structure $\Sigma$ is bounded-posetal if and only if it is posetal and there exist $\bot,\top\in \mathsf{P}_\Sigma$ and $\mathbf{b},\mathbf{t}\in \mathsf{R}_\Sigma$ such that for every $a\in \mathsf{P}_\Sigma$, $\mathbf{b}\in \rho^{\Sigma}(\bot,a)$ and $\mathbf{t}\in \rho^{\Sigma}(a,\top)$.
\end{theorem}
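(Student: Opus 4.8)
The plan is to prove both implications by shuttling information between the singleton fibre $\mathbf{p}_\Sigma(\{*\})$ and the ``generic'' fibre $\mathbf{p}_\Sigma(\mathsf{P}_\Sigma)$, exploiting that $\mathbf{p}_\Sigma$ must send every reindexing map to a bound-preserving monotone map. The posetal half of the statement is immediate in both directions: bounded-posetal implies posetal simply by forgetting the extra structure, and conversely posetality already gives that each $\mathbf{p}_\Sigma(I)$ is a poset, so only the \emph{existence} of bounds and their \emph{functorial preservation} remain to be addressed.

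For the forward direction I would first invoke Proposition \ref{singleton}: the isomorphism $(\mathsf{P}_\Sigma,\vdash^\Sigma)\cong\mathbf{p}_\Sigma(\{*\})$ in $\mathsf{Bin}$ is an isomorphism of posets, hence transports the minimum and maximum of $\mathbf{p}_\Sigma(\{*\})$ to elements $\bot,\top\in\mathsf{P}_\Sigma$ that are respectively $\vdash^\Sigma$-least and $\vdash^\Sigma$-greatest. The crucial step is then to extract \emph{uniform} realizers, and for this I would apply $\mathbf{p}_\Sigma$ to the terminal map $!:\mathsf{P}_\Sigma\to\{*\}$. Since $\mathbf{p}_\Sigma(!)$ preserves minima and maxima, it sends the least and greatest elements of $\mathbf{p}_\Sigma(\{*\})$ — i.e.\ the functions with values $\bot$ and $\top$ — to the constant functions $k_\bot,k_\top:\mathsf{P}_\Sigma\to\mathsf{P}_\Sigma$, which are therefore the minimum and maximum of $\mathbf{p}_\Sigma(\mathsf{P}_\Sigma)$. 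Comparing these with $\mathsf{id}_{\mathsf{P}_\Sigma}$ in that fibre gives $k_\bot\vdash^\Sigma_{\mathsf{P}_\Sigma}\mathsf{id}_{\mathsf{P}_\Sigma}$ and $\mathsf{id}_{\mathsf{P}_\Sigma}\vdash^\Sigma_{\mathsf{P}_\Sigma}k_\top$; unwinding the definition of $\vdash^\Sigma_{\mathsf{P}_\Sigma}$ yields $\bigcap_{a\in\mathsf{P}_\Sigma}\rho^\Sigma(\bot,a)\neq\emptyset$ and $\bigcap_{a\in\mathsf{P}_\Sigma}\rho^\Sigma(a,\top)\neq\emptyset$, and any $\mathbf{b},\mathbf{t}$ in these intersections are the required uniform realizers.

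For the converse I would assume $\Sigma$ posetal together with $\bot,\top,\mathbf{b},\mathbf{t}$ as stated and argue directly that for each set $I$ the constant functions $k_\bot,k_\top:I\to\mathsf{P}_\Sigma$ are the minimum and maximum of $\mathbf{p}_\Sigma(I)$. Indeed, for any $\psi:I\to\mathsf{P}_\Sigma$ the hypothesis $\mathbf{b}\in\rho^\Sigma(\bot,\psi(i))$ for every $i$ places $\mathbf{b}$ in $\bigcap_{i\in I}\rho^\Sigma(\bot,\psi(i))$, so $k_\bot\vdash^\Sigma_I\psi$, and symmetrically $\psi\vdash^\Sigma_I k_\top$ via $\mathbf{t}$. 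Preservation under reindexing is then formal: for $f:I\to J$ we have $\mathbf{p}_\Sigma(f)(k_\bot)=k_\bot\circ f=k_\bot$ and likewise for $k_\top$, so $\mathbf{p}_\Sigma(f)$ carries minimum to minimum and maximum to maximum. Together with posetality this exhibits the factorization of $\mathbf{p}_\Sigma$ through $\mathsf{bPos}$.

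The one genuinely delicate point is the forward direction, specifically the passage from the pointwise statements ``$\rho^\Sigma(\bot,a)\neq\emptyset$ for each $a$'' (which fibrewise boundedness alone supplies) to a single witness $\mathbf{b}$ valid simultaneously for all $a$. This uniformity cannot be read off any individual fibre; it is precisely the preservation of bounds along $\mathbf{p}_\Sigma(!)$ that forces the constant function $k_\bot$ to be the least element of $\mathbf{p}_\Sigma(\mathsf{P}_\Sigma)$ and hence produces the common realizer when tested against $\mathsf{id}_{\mathsf{P}_\Sigma}$. I would emphasize that it is the $\mathsf{bPos}$-morphism condition on the reindexing maps, and not merely the fibrewise existence of $\bot$ and $\top$, that is doing the essential work here.
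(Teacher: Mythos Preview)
Your proof is correct and tracks the paper's argument closely: both directions pivot on the fibre $\mathbf{p}_\Sigma(\mathsf{P}_\Sigma)$, identify the constant function $k_\bot$ as its minimum, and then test it against $\mathsf{id}_{\mathsf{P}_\Sigma}$ to extract the uniform realizer $\mathbf{b}$; the converse is identical to the paper's.

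One small point of contrast in the forward direction is worth noting. You obtain $k_\bot$ as the minimum of $\mathbf{p}_\Sigma(\mathsf{P}_\Sigma)$ by pushing the minimum of $\mathbf{p}_\Sigma(\{*\})$ along $\mathbf{p}_\Sigma(!)$ and invoking preservation of bounds, and you then stress that this preservation is ``doing the essential work.'' The paper instead argues entirely inside the fibre: if $\bot_{\mathsf{P}_\Sigma}$ denotes the minimum of $\mathbf{p}_\Sigma(\mathsf{P}_\Sigma)$, then $\bot_{\mathsf{P}_\Sigma}\vdash^\Sigma_{\mathsf{P}_\Sigma}k_b$ for every $b$, so by Proposition~\ref{pointwise} each value $\bot_{\mathsf{P}_\Sigma}(a)$ lies $\vdash^\Sigma$-below every $b$, and antisymmetry forces $\bot_{\mathsf{P}_\Sigma}=k_\bot$. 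This shows that the uniform realizer can in fact be extracted from the single fibre over $\mathsf{P}_\Sigma$ together with posetality, without ever invoking the $\mathsf{bPos}$-morphism condition on reindexing---so your closing emphasis is slightly overstated, though your route via functoriality is of course equally valid.
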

\begin{proof} We prove the two directions of the equivalence.
\begin{enumerate}
\item[$(\Rightarrow)$] Let $\Sigma$ be bounded-posetal and suppose $\bot_{\mathsf{P}_\Sigma}$ and $\top_{\mathsf{P}_\Sigma}$ are the minimum and maximum, respectively, in $(\mathsf{P}_\Sigma^{\mathsf{P}_\Sigma},\vdash^{\Sigma}_{\mathsf{P}})$. Since for every $b\in \mathsf{P}_\Sigma$ we have $\bot_{\mathsf{P}_\Sigma}\vdash^{\Sigma}_{\mathsf{P}_\Sigma}k_b$, where $k_b$ is the constant function with value $b$, for every $a,b\in \mathsf{P}_\Sigma$, $\bot_{\mathsf{P}_\Sigma}(a)\vdash^{\Sigma}b$. Thus, for every $a\in \mathsf{P}_\Sigma$, $\bot_{\mathsf{P}_\Sigma}(a)$ must be equal to the unique minimum $\bot $ of $(\mathsf{P}_\Sigma,\vdash^\Sigma )$. Moreover, since $\bot_{\mathsf{P}_\Sigma}\vdash^{\Sigma}_{\mathsf{P}_\Sigma}\mathsf{id}_{\mathsf{P}_\Sigma}$, there exists $\mathbf{b}\in \mathsf{P}_\Sigma$ such that $\mathbf{b}\in \rho^{\Sigma}(\bot,a)$ for every $a\in \mathsf{P}_\Sigma$. An analogous proof works for maxima.
\item[$(\Leftarrow)$] Suppose that there exist $\bot,\top\in \mathsf{P}_\Sigma$ and $\mathbf{b},\mathbf{t}\in \mathsf{R}_\Sigma$ such that for every $a\in \mathsf{P}_\Sigma$, $\mathbf{b}\in \rho^{\Sigma}(\bot,a)$ and $\mathbf{t}\in \rho^{\Sigma}(a,\top)$ and let $I$ be a set. Consider the constant functions $\bot_I$ and $\top_I$ from $I$ to $\mathsf{P}_\Sigma$ defined as $\bot$ and $\top$ on any entry, respectively. For every $\varphi:I\rightarrow \mathsf{P}_\Sigma$ we clearly have that $\mathbf{b}\in \bigcap_{i\in I}\rho^{\Sigma}(\bot_I(i),\varphi(i))$ and $\mathbf{t}\in \bigcap_{i\in I}\rho^{\Sigma}(\varphi(i),\top_I(i))$; thus $\bot_I\vdash^\Sigma _I\varphi\vdash^\Sigma _I\top_I$. So $\bot_I$ is a minimum and $\top_I$ is a maximum in $(\mathsf{P}_\Sigma^I,\vdash^\Sigma _I)$. Since constant functions are preserved by precomposition, minima and maxima are so. Thus $\Sigma$ is bounded-posetal.
\end{enumerate}
\end{proof}

Here we present two sufficient conditions for concluding that a bounded-posetal PR-structure is a P-structure.

\begin{proposition} Let $\Sigma$ be a bounded-posetal PR-structure. If $\rho^{\Sigma}(a,\top)=\{\mathbf{t}\}$ for every $a\in \mathsf{P}_\Sigma$, then  $\Sigma$ is a P-structure. 
\end{proposition}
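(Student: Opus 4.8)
The plan is to exhibit a single realizer that dominates all the others and then to collapse $\mathsf{R}_\Sigma$ down to it by means of Lemma \ref{lemma1}. Concretely, I would prove that the realizer $\mathbf{t}$ witnessing boundedness satisfies $\rho^{\Sigma-}(s)\subseteq \rho^{\Sigma-}(\mathbf{t})$ for every $s\in \mathsf{R}_\Sigma$; equivalently, that $\mathbf{t}$ realizes every valid entailment, i.e.\ that $a\vdash^\Sigma b$ implies $\mathbf{t}\in \rho^\Sigma(a,b)$. Once this is established, Lemma \ref{lemma1} applied with $\overline{r}=\mathbf{t}$ and $S=\mathsf{R}_\Sigma\setminus\{\mathbf{t}\}$ shows that $\Sigma$ is equivalent to a PR-structure with the single realizer $\mathbf{t}$, hence that $\Sigma$ is a P-structure.

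The crucial first observation, and the step I expect to be the real point of the argument, is that the singleton hypothesis forces the reflexivity realizer and the top realizer to coincide. Since $\Sigma$ is bounded-posetal it is in particular preorderal, so by Theorem \ref{preorderal} there is an $\mathbf{i}\in \mathsf{R}_\Sigma$ with $\mathbf{i}\in \rho^\Sigma(a,a)$ for all $a$; taking $a=\top$ gives $\mathbf{i}\in \rho^\Sigma(\top,\top)=\{\mathbf{t}\}$, whence $\mathbf{i}=\mathbf{t}$. Consequently $\mathbf{t}\in \rho^\Sigma(a,a)$ for every $a\in \mathsf{P}_\Sigma$, i.e.\ $\mathbf{t}$ already realizes every instance of reflexivity.

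With this in hand the domination claim follows from a single use of composition. Given $a\vdash^\Sigma b$, choose $s\in \rho^\Sigma(a,b)$ and let $q:=\mathbf{t}\Box s$ be the composite provided by the second clause of Theorem \ref{preorderal}, which is a single element working uniformly for all triples (its defining property reads: $q\in\rho^\Sigma(a,c)$ whenever $s\in\rho^\Sigma(a,b)$ and $\mathbf{t}\in\rho^\Sigma(b,c)$). Instantiating at the triple with $c=\top$, using $s\in \rho^\Sigma(a,b)$ and $\mathbf{t}\in \rho^\Sigma(b,\top)$ (available by boundedness, Theorem \ref{bound}), gives $q\in \rho^\Sigma(a,\top)=\{\mathbf{t}\}$, so $q=\mathbf{t}$. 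Instantiating the same property at the triple with $c=b$, using $s\in \rho^\Sigma(a,b)$ together with $\mathbf{t}\in \rho^\Sigma(b,b)$ from the previous paragraph, gives $q\in \rho^\Sigma(a,b)$. Combining the two yields $\mathbf{t}\in \rho^\Sigma(a,b)$, which is exactly the domination property. Note that antisymmetry of $\vdash^\Sigma$ is never used: only the preorderal structure and the boundedness witness $\mathbf{t}$ enter.

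The only delicate point is the identification $\mathbf{i}=\mathbf{t}$; everything else is routine manipulation of the composition operator $\Box$ together with a final appeal to Lemma \ref{lemma1}. I would therefore present the argument in the order above, isolating $\mathbf{i}=\mathbf{t}$ as the key lemma and then deriving universality of $\mathbf{t}$ from the two instantiations $c=\top$ and $c=b$ of the same composite.
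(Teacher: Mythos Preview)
Your proof is correct and follows essentially the same route as the paper: first identify $\mathbf{i}=\mathbf{t}$ via $\rho^\Sigma(\top,\top)=\{\mathbf{t}\}$, then use the composite $\mathbf{t}\Box s$ at the triples $(a,b,\top)$ and $(a,b,b)$ to force $\mathbf{t}\in\rho^\Sigma(a,b)$, and conclude via Lemma~\ref{lemma1}. The only cosmetic differences are the order in which you present the two instantiations and your explicit invocation of Lemma~\ref{lemma1} at the end (the paper leaves this implicit).
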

\begin{proof}
If $\rho^{\Sigma}(a,\top)=\{\mathbf{t}\}$ for every $a\in \mathsf{P}_\Sigma$, then in particular $\rho^{\Sigma}(\top,\top)=\{\mathbf{t}\}$. However, since $\Sigma$ is preorderal, $\mathbf{t}\in \rho^{\Sigma}(a,a)$ for every $a\in \mathsf{P}_\Sigma$. 
Suppose now that $\rho^{\Sigma}(a,b)\neq \emptyset$ and $s\in \rho^{\Sigma}(a,b)$. Since $\mathbf{t}\in \rho^{\Sigma}(b,b)$, then $s\Box\mathbf{t}\in \rho^{\Sigma}(a,b)$. However, since $\mathbf{t}\in \rho^{\Sigma}(b,\top)$, $s\Box \mathbf{t}\in \rho^{\Sigma}(a,\top)=\{\mathbf{t}\}$.
Thus $s\Box \mathbf{t}=\mathbf{t}\in \rho^{\Sigma}(a,b)$. Thus we have proven that $\mathbf{t}$ is in $\rho^{\Sigma}(a,b)$ whenever $a\vdash^\Sigma  b$. As a consequence, $\Sigma$ is a P-structure. 
\end{proof}

Simmetrically, one has also the following

\begin{proposition}  Let $\Sigma$ be a bounded-posetal PR-structure. If $\rho^{\Sigma}(\bot,a)=\{\mathbf{b}\}$ for every $a\in \mathsf{P}_\Sigma$, then  $\Sigma$ is a P-structure. 
\end{proposition}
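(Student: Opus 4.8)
The plan is to mirror the proof of the preceding proposition, exploiting the duality between $\bot$ and $\top$: where that argument post-composed a given realizer with $\mathbf{t}$ and used $\rho^\Sigma(a,\top)=\{\mathbf{t}\}$ to pin the composite down, here I will pre-compose with $\mathbf{b}$ and use the hypothesis $\rho^\Sigma(\bot,a)=\{\mathbf{b}\}$. As before, the goal reduces to proving that a single realizer, namely $\mathbf{b}$, lies in $\rho^\Sigma(a,b)$ for every pair with $a\vdash^\Sigma b$; once this is established, every realizer $r$ satisfies $\rho^{\Sigma-}(r)\subseteq\rho^{\Sigma-}(\mathbf{b})$, so Lemma \ref{lemma1} lets me discard all realizers except $\mathbf{b}$ and conclude that $\Sigma$ is a P-structure.

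First I would identify $\mathbf{b}$ with the realizer witnessing reflexivity. Since $\Sigma$ is bounded-posetal it is in particular posetal, hence preorderal, so by Theorem \ref{preorderal} there is some $\mathbf{i}\in\mathsf{R}_\Sigma$ with $\mathbf{i}\in\rho^\Sigma(a,a)$ for all $a$. Taking $a=\bot$ in the hypothesis gives $\rho^\Sigma(\bot,\bot)=\{\mathbf{b}\}$, and since $\mathbf{i}\in\rho^\Sigma(\bot,\bot)$ this forces $\mathbf{i}=\mathbf{b}$. Thus $\mathbf{b}\in\rho^\Sigma(a,a)$ for every $a\in\mathsf{P}_\Sigma$.

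The main step is the composition argument. Fix $a,b$ with $\rho^\Sigma(a,b)\neq\emptyset$ and pick $s\in\rho^\Sigma(a,b)$. By Theorem \ref{preorderal} there is a realizer $s\Box\mathbf{b}$, depending only on $s$ and $\mathbf{b}$, which witnesses every composition whose first leg is realized by $\mathbf{b}$ and whose second leg is realized by $s$. Applying this with the reflexivity instance $\mathbf{b}\in\rho^\Sigma(a,a)$ and $s\in\rho^\Sigma(a,b)$ gives $s\Box\mathbf{b}\in\rho^\Sigma(a,b)$; applying the very same realizer with $\mathbf{b}\in\rho^\Sigma(\bot,a)$ and $s\in\rho^\Sigma(a,b)$ gives $s\Box\mathbf{b}\in\rho^\Sigma(\bot,b)=\{\mathbf{b}\}$. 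Hence $s\Box\mathbf{b}=\mathbf{b}$ and therefore $\mathbf{b}\in\rho^\Sigma(a,b)$. The one point that requires care — and the only genuine obstacle — is that both conclusions must be drawn for the single witness $s\Box\mathbf{b}$ produced by Theorem \ref{preorderal}, whose defining property is quantified over all triples of propositions; it is precisely this uniformity in the objects (and not in the choice of composite) that allows the second application to collapse the first onto $\mathbf{b}$. With $\mathbf{b}\in\rho^\Sigma(a,b)$ for every entailment now in hand, Lemma \ref{lemma1} (applied with $\overline{r}=\mathbf{b}$ and $S=\mathsf{R}_\Sigma\setminus\{\mathbf{b}\}$) finishes the argument exactly as in the dual case.
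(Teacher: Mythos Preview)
Your proof is correct and is precisely the symmetric argument the paper has in mind (the paper does not spell out a proof for this proposition, merely writing ``Simmetrically, one has also the following''). Your use of the composition convention from Theorem~\ref{preorderal} is accurate---with $\mathbf{b}$ as the first leg and $s$ as the second, the single realizer $s\Box\mathbf{b}$ lands both in $\rho^\Sigma(a,b)$ and in $\rho^\Sigma(\bot,b)=\{\mathbf{b}\}$---and your emphasis on the uniformity of $s\Box\mathbf{b}$ over all triples of propositions is exactly the point that makes the argument work.
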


Here we have a sufficient condition for concluding that a partitioned posetal PR-structure is a P-structure.

\begin{proposition} Every partitioned posetal PR-structure such that $(I,\vdash^{\Sigma}_I)$ has a minimum for every $I$ (or such that $(I,\vdash^{\Sigma}_I)$ has a maximum for every $I$) is a P-structure.
\end{proposition}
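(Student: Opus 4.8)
The plan is to invoke the characterization of P-structures proved above: since one implication is exactly Proposition \ref{pointwise}, it suffices to show that whenever $\varphi(i)\vdash^{\Sigma}\psi(i)$ for all $i\in I$ one already has $\varphi\vdash^{\Sigma}_{I}\psi$. Because $\Sigma$ is partitioned, every nonempty $\rho^{\Sigma}(a,b)$ is a singleton, so this reduces to exhibiting a single realizer that serves all comparable pairs: concretely, I will produce $\mathbf{b}\in\mathsf{R}_\Sigma$ (resp.\ $\mathbf{t}$) with $\mathbf{b}\in\rho^{\Sigma}(a,b)$ whenever $a\vdash^{\Sigma}b$. Partitionedness then upgrades this to $\rho^{\Sigma}(a,b)=\{\mathbf{b}\}$, so any pointwise-comparable tuples share the realizer $\mathbf{b}$ and the characterization applies.

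Assume first that every $(\mathsf{P}_\Sigma^{I},\vdash^{\Sigma}_{I})$ has a minimum $m_I$. Writing $k_b$ for the constant function at $b$, from $m_I\vdash^{\Sigma}_{I}k_b$ and Proposition \ref{pointwise} I get $m_I(i)\vdash^{\Sigma}b$ for every $i$ and every $b$; hence each value $m_I(i)$ is a minimum of $(\mathsf{P}_\Sigma,\vdash^{\Sigma})$, and this minimum is unique because $\Sigma$ is posetal, hence antisymmetric. Call it $\bot$. Consequently $m_I$ is the constant function $\bot_I$, and $\bot_I\vdash^{\Sigma}_{I}\varphi$ for every $\varphi\colon I\to\mathsf{P}_\Sigma$.

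The decisive step is diagonal. As $\bot\vdash^{\Sigma}a$ for all $a$ and $\Sigma$ is partitioned, I may write $\rho^{\Sigma}(\bot,a)=\{g(a)\}$ for a well-defined map $g\colon\mathsf{P}_\Sigma\to\mathsf{R}_\Sigma$. Specialising the previous paragraph to $I=\mathsf{P}_\Sigma$ and $\varphi=\mathsf{id}_{\mathsf{P}_\Sigma}$ gives $\bigcap_{a\in\mathsf{P}_\Sigma}\{g(a)\}\neq\emptyset$, which forces $g$ to be constant, say $g\equiv\mathbf{b}$; thus $\rho^{\Sigma}(\bot,a)=\{\mathbf{b}\}$ for every $a$. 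I expect this passage to be the main obstacle: it is the only place where the quantification over all index sets is genuinely used, the trick being to feed the abstract ``a minimum exists'' hypothesis the identity function on $\mathsf{P}_\Sigma$, so that the diagonal of realizers is forced to collapse to a single value.

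It remains to collapse the whole structure. From $\mathbf{i}\in\rho^{\Sigma}(\bot,\bot)$ (Theorem \ref{preorderal}) and $\rho^{\Sigma}(\bot,\bot)=\{\mathbf{b}\}$ I obtain $\mathbf{b}=\mathbf{i}$. Now if $a\vdash^{\Sigma}b$ and $s\in\rho^{\Sigma}(a,b)$, composing $\mathbf{b}\in\rho^{\Sigma}(\bot,a)$ with $s$ yields $s\Box\mathbf{b}\in\rho^{\Sigma}(\bot,b)=\{\mathbf{b}\}$, so $s\Box\mathbf{b}=\mathbf{b}$; since $\mathbf{b}=\mathbf{i}$ is the identity of the monoid of Corollary \ref{part}, this reads $s=s\Box\mathbf{i}=\mathbf{b}$. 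Hence $\mathbf{b}\in\rho^{\Sigma}(a,b)$ whenever $a\vdash^{\Sigma}b$, as required. This final computation coincides with that of the earlier sufficient-condition proposition; note, however, that having only minima does not force $\Sigma$ to be bounded-posetal, so I prefer to run the computation directly rather than invoke that statement. The maximum case is symmetric, producing $\top$ and $\mathbf{t}=\mathbf{i}$ with $\rho^{\Sigma}(a,\top)=\{\mathbf{t}\}$, and then $\mathbf{t}\Box s=\mathbf{t}=\mathbf{i}$ for $s\in\rho^{\Sigma}(a,b)$ gives $s=\mathbf{t}$.
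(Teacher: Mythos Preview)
Your proof is correct and follows essentially the same route as the paper's: both arguments first show, via the diagonal trick with $I=\mathsf{P}_\Sigma$ and $\varphi=\mathsf{id}_{\mathsf{P}_\Sigma}$ (i.e., the computation in the proof of Theorem~\ref{bound}), that $\rho^{\Sigma}(\bot,a)=\{\mathbf{i}\}$ for every $a$, and then use the monoid identity from Corollary~\ref{part} together with the composition $s\Box\mathbf{i}\in\rho^{\Sigma}(\bot,b)=\{\mathbf{i}\}$ to conclude that every realizer $s$ occurring in some $\rho^{\Sigma}(a,b)$ equals $\mathbf{i}$. Your write-up is simply more explicit about the intermediate steps (in particular the constancy of $m_I$) and frames the conclusion via the P-structure characterization rather than stating directly that all used realizers coincide, but the underlying argument is the same.
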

\begin{proof} Let $\Sigma$ be a partitioned posetal PR-structure such that $(I,\vdash^{\Sigma}_I)$ has a minimum for every $I$. Suppose $r$ is in $R$. Then $\rho^{\Sigma}(a,b)=\{r\}$ for some $a,b$. By corollary \ref{part}, $\mathbf{i}\Box r=r$. Since $\rho^{\Sigma}(\bot,\bot)=\{\mathbf{i}\}$, then $\rho^{\Sigma}(\bot,a)=\{\mathbf{i}\}$ for every $a\in \mathsf{P}_\Sigma$ as a consequence of the proof of theorem \ref{bound}. Thus $\{\mathbf{i}\Box r\}=\rho^{\Sigma}(\bot,b)=\{\mathbf{i}\}$. Thus $r=\mathbf{i}$.
\end{proof}

\subsection{Bounded lattical PR-structures}
The category $\mathsf{bLat}$ has as objects bounded lattices and as arrows bounded lattice morphisms. A PR-structure is \emph{bounded lattical} if $\mathbf{p}_\Sigma$ factors through the subcategory  $\mathbf{J}:\mathsf{bLat}\rightarrow \mathsf{Bin}$, that is, for every set $I$, $\mathbf{p}_{\Sigma}(I)$ is a bounded lattice and for every function $f$, $\mathbf{p}_{\Sigma}(f)$ preserves finite suprema and infima.
As a direct consequence of the notions of binary infimum and supremum, and of proposition \ref{pointwise}, we have the following
\begin{proposition}
If  $\Sigma$ is a bounded-lattical PR-structure, then for every $\varphi,\psi:I\rightarrow \mathsf{P}_\Sigma$ and for every $i\in I$
\begin{enumerate}
\item $(\varphi\wedge_I\psi)(i)\vdash^\Sigma  \varphi(i)\wedge \psi(i)$
\item $\varphi(i)\vee \psi(i)\vdash^\Sigma (\varphi\vee\psi)(i)$
\end{enumerate}\end{proposition}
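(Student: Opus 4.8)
The plan is to derive both comparisons directly from the universal (greatest-lower-bound and least-upper-bound) characterizations of the binary meet and join, feeding into them the pointwise comparison supplied by Proposition \ref{pointwise}. No appeal to preservation of meets and joins by the functorial maps $\mathbf{p}_\Sigma(f)$ is needed, which is exactly why only one direction of each inequality is asserted.

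First I would check that the elementwise expressions $\varphi(i)\wedge\psi(i)$ and $\varphi(i)\vee\psi(i)$ are legitimate. Since $\Sigma$ is bounded-lattical, $(\mathsf{P}_\Sigma^{\{*\}},\vdash^\Sigma_{\{*\}})$ is in particular a lattice; by Proposition \ref{singleton} it is isomorphic in $\mathsf{Bin}$ to $(\mathsf{P}_\Sigma,\vdash^\Sigma)$, so $(\mathsf{P}_\Sigma,\vdash^\Sigma)$ is itself a bounded lattice and binary meets and joins of its elements exist. Likewise $(\mathsf{P}_\Sigma^I,\vdash^\Sigma_I)$ is a lattice, so $\varphi\wedge_I\psi$ and $\varphi\vee\psi$ are defined there.

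For item (1), by definition of the meet in $(\mathsf{P}_\Sigma^I,\vdash^\Sigma_I)$ we have $\varphi\wedge_I\psi\vdash^\Sigma_I\varphi$ and $\varphi\wedge_I\psi\vdash^\Sigma_I\psi$. Applying Proposition \ref{pointwise} to each of these gives, for every $i\in I$, $(\varphi\wedge_I\psi)(i)\vdash^\Sigma\varphi(i)$ and $(\varphi\wedge_I\psi)(i)\vdash^\Sigma\psi(i)$. Thus $(\varphi\wedge_I\psi)(i)$ is a common lower bound of $\varphi(i)$ and $\psi(i)$ in $(\mathsf{P}_\Sigma,\vdash^\Sigma)$; since $\varphi(i)\wedge\psi(i)$ is by definition the greatest such lower bound, we conclude $(\varphi\wedge_I\psi)(i)\vdash^\Sigma\varphi(i)\wedge\psi(i)$. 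Item (2) is entirely dual: from $\varphi\vdash^\Sigma_I\varphi\vee\psi$ and $\psi\vdash^\Sigma_I\varphi\vee\psi$, Proposition \ref{pointwise} yields $\varphi(i)\vdash^\Sigma(\varphi\vee\psi)(i)$ and $\psi(i)\vdash^\Sigma(\varphi\vee\psi)(i)$, exhibiting $(\varphi\vee\psi)(i)$ as a common upper bound of $\varphi(i)$ and $\psi(i)$, whence $\varphi(i)\vee\psi(i)\vdash^\Sigma(\varphi\vee\psi)(i)$ by the least-upper-bound property.

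There is no serious obstacle here; the only point requiring care is ensuring we are comparing elements inside genuine preorders, so that the universal properties of $\wedge$ and $\vee$ may legitimately be invoked. This is guaranteed because bounded-lattical implies posetal and hence preorderal, and because Proposition \ref{singleton} transports the lattice structure from $\mathsf{P}_\Sigma^{\{*\}}$ to $\mathsf{P}_\Sigma$. I would also note that the reverse inequalities would instead require the fact that each $\mathbf{p}_\Sigma(f)$ preserves finite meets and joins, which the stated proposition deliberately does not use.
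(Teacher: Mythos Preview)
Your proof is correct and follows essentially the same approach as the paper, which states the result as ``a direct consequence of the notions of binary infimum and supremum, and of Proposition~\ref{pointwise}'' without spelling out the details. Your expansion is exactly the intended argument; the only caveat is that your closing remark slightly overstates matters, since in a bounded-lattical $\Sigma$ the reverse inequalities do in fact hold (via preservation of meets and joins under the evaluation maps $\mathbf{p}_\Sigma(\{*\}\hookrightarrow I)$), so the one-sidedness of the proposition reflects what the elementary argument yields rather than a genuine failure of the converse.
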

Next we show that, for a bounded-lattical PR-structure, the requirement to be finite does not force the fact that it is a P-structure. 
\begin{proposition} There exist finite bounded-lattical PR-structures which are not P-structures.
\end{proposition}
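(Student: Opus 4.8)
The plan is to produce a single explicit finite witness rather than an existence argument. I would take $\mathsf{P}_\Sigma=\{\bot,\top\}$ and $\mathsf{R}_\Sigma=\{\mathbf{b},\mathbf{t},\mathbf{i}\}$ (three distinct realizers), and set $\rho^\Sigma(\bot,\bot)=\{\mathbf{b},\mathbf{i}\}$, $\rho^\Sigma(\top,\top)=\{\mathbf{t},\mathbf{i}\}$, $\rho^\Sigma(\bot,\top)=\{\mathbf{b},\mathbf{t}\}$ and $\rho^\Sigma(\top,\bot)=\emptyset$. First I would record the easy structural facts. The relation $\vdash^\Sigma$ is the two-element chain $\bot\vdash^\Sigma\top$, hence antisymmetric; the realizer $\mathbf{i}$ lies in $\rho^\Sigma(\bot,\bot)\cap\rho^\Sigma(\top,\top)$ and witnesses reflexivity; and a short check of the nine cases (most of them vacuous, since $\mathbf{b}$ only realizes pairs starting at $\bot$, $\mathbf{t}$ only pairs ending at $\top$, and $\mathbf{i}$ only the diagonal) provides a transitivity witness $s\Box r$ for every $r,s$. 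By Theorem~\ref{preorderal} the structure is therefore preorderal, and hence posetal. Since $\mathbf{b}\in\rho^\Sigma(\bot,\bot)\cap\rho^\Sigma(\bot,\top)$ and $\mathbf{t}\in\rho^\Sigma(\top,\top)\cap\rho^\Sigma(\bot,\top)$, Theorem~\ref{bound} gives that it is bounded-posetal, with $\bot$ and $\top$ as the global bounds.

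The heart of the argument is an explicit description of the fibers. Identifying $\varphi\in\mathsf{P}_\Sigma^I$ with its true set $S_\varphi:=\varphi^{-1}(\top)\subseteq I$, a direct computation of common realizers shows that $\varphi\vdash^\Sigma_I\psi$ holds if and only if $S_\varphi\subseteq S_\psi$ and in addition $S_\varphi=S_\psi$, or $S_\varphi=\emptyset$, or $S_\psi=I$: indeed, once all three coordinate types $(\bot,\bot)$, $(\bot,\top)$, $(\top,\top)$ occur, the only candidate realizers are $\{\mathbf{b},\mathbf{i}\}\cap\{\mathbf{b},\mathbf{t}\}\cap\{\mathbf{t},\mathbf{i}\}=\emptyset$. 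Thus under $\varphi\mapsto S_\varphi$ the fiber $(\mathsf{P}_\Sigma^I,\vdash^\Sigma_I)$ is the poset on $\mathcal{P}(I)$ with least element $\emptyset$, greatest element $I$, and the proper nonempty subsets forming an antichain in between. I would then observe that every such height-two poset is a bounded lattice: two distinct middle elements have $\emptyset$ as their only common lower bound and $I$ as their only common upper bound, so all binary meets and joins exist and $\emptyset,I$ are the bounds. Reindexing along $f\colon I\to J$ is $S\mapsto f^{-1}(S)$, which preserves the bounds ($f^{-1}(\emptyset)=\emptyset$, $f^{-1}(J)=I$) and is monotone. The step I expect to be the main obstacle is precisely the full verification that the structure is bounded-lattical: the fiber meets $\wedge_I$ and joins $\vee_I$ are genuinely \emph{not} computed coordinatewise — exactly the one-sided phenomenon recorded in the preceding proposition — so their behaviour under reindexing must be checked against the height-two operations rather than against the pointwise ones, and this is where the delicate case analysis lies.

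Finally I would separate $\Sigma$ from the P-structures. Taking $I=\{1,2,3\}$ with $S_\varphi=\{1\}$ and $S_\psi=\{1,2\}$, the coordinate pairs are $(\top,\top),(\bot,\top),(\bot,\bot)$, so $\varphi(k)\vdash^\Sigma\psi(k)$ holds for every $k$, yet $\rho^\Sigma(\top,\top)\cap\rho^\Sigma(\bot,\top)\cap\rho^\Sigma(\bot,\bot)=\emptyset$ forces $\varphi\not\vdash^\Sigma_I\psi$. By the above characterization of P-structures this shows $\Sigma$ is not a P-structure. Moreover the three sets $\rho^{\Sigma-}(\mathbf{b})=\{(\bot,\bot),(\bot,\top)\}$, $\rho^{\Sigma-}(\mathbf{t})=\{(\top,\top),(\bot,\top)\}$ and $\rho^{\Sigma-}(\mathbf{i})=\{(\bot,\bot),(\top,\top)\}$ form an antichain in $(\mathcal{P}(\mathsf{P}_\Sigma\times\mathsf{P}_\Sigma),\subseteq)$, so by Theorem~\ref{can} we get $\delta(\Sigma)=3$; in particular $\Sigma$ is finite, which completes the construction.
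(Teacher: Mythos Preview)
Your construction coincides exactly with the paper's, and your description of the fibers as the height-two lattice on $\mathcal{P}(I)$ (proper nonempty subsets forming an antichain between $\emptyset$ and $I$) is correct and sharper than the paper's pictorial sketch. The verifications of posetality, bounded-posetality, $\delta(\Sigma)=3$, and the failure of the P-structure property are all sound and go beyond what the paper spells out.

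The genuine gap is exactly where you place it, and it is fatal rather than merely ``delicate'': for this $\Sigma$, reindexing does \emph{not} preserve binary meets and joins, so $\mathbf{p}_\Sigma$ does not factor through $\mathsf{bLat}$ as the definition requires. Take $J=\{1,2,3\}$, $I=\{1\}$, $f\colon I\to J$ with $f(1)=1$, and $\varphi,\psi\in\mathsf{P}_\Sigma^{\,J}$ with $S_\varphi=\{1\}$, $S_\psi=\{1,2\}$. In your height-two picture these are distinct middle elements, so $\varphi\wedge_J\psi=\bot_J$ and hence $(\varphi\wedge_J\psi)\circ f=\bot_I$; but $\varphi\circ f=\psi\circ f=\top_I$, giving $(\varphi\circ f)\wedge_I(\psi\circ f)=\top_I\neq\bot_I$. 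Dually, with $f(1)=3$ and $S_\varphi=\{1\}$, $S_\psi=\{2\}$ one gets $(\varphi\vee_J\psi)\circ f=\top_I$ while $(\varphi\circ f)\vee_I(\psi\circ f)=\bot_I$. The paper dispatches this step with the single word ``Clearly'', but the same counterexamples apply there; your instinct that this was the crux was right, and the example as written does not establish the proposition under the stated definition of bounded-lattical.
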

\begin{proof}
Take $\mathsf{P}_\Sigma=\{\bot,\top\}$, $\mathsf{R}_\Sigma=\{\mathbf{t},\mathbf{i},\mathbf{b}\}$ with $\bot\neq \top$ and $\mathbf{t},\mathbf{i},\mathbf{b}$ distinct and 
$$\begin{cases}
\rho(\bot,\bot):=\{\mathbf{b},\mathbf{i}\}\\
\rho(\bot,\top):=\{\mathbf{b},\mathbf{t}\}\\
\rho(\top,\top):=\{\mathbf{i},\mathbf{t}\}\\
\rho(\top,\bot):=\emptyset\\
\end{cases}$$
which can be represented as follows 
$\xymatrix{
\bot\ar@(ul,ur)^{\{\mathbf{b},\mathbf{i}\}}\ar[r]^{\{\mathbf{b},\mathbf{t}\}}	&\top\ar@(ul,ur)^{\{\mathbf{i},\mathbf{t}\}}\\
}$.
For every $I$, we have a bounded lattice $(\{\top,\bot\}^I,\vdash^\Sigma _I)$ of the following form.\\

{\tiny
$$\xymatrix{
		&	&\varphi_1\ar@(ul,ur)	\ar[rrdd]		\\
		&	&\varphi_2\ar@(ul,ur)\ar[rrd]			\\
\bot_I\ar@(ul,ur)\ar[rruu]\ar[rru]\ar[rrd]\ar[rrdd]	& &...		&		&\top_I\ar@(ul,ur)\\
		&	&\psi_2\ar@(dl,dr)\ar[rru]\\
		&	&\psi_1\ar@(dl,dr)\ar[rruu]	& &	&\\
		&&&&\\ }$$
		}
		
Clearly binary infima and suprema are preserved by precomposition.

\end{proof}

\section{PR-structures coming from partial applicative structures}
We focus here on the case of PR-structures coming from partial applicative structures.
Let $\mathcal{R}=(|\mathcal{R}|,\cdot_\mathcal{R})$ be a partial applicative structure. We consider the PR-structure $\Sigma[\mathcal{R}]:=(\mathcal{P}(|\mathcal{R}|),|\mathcal{R}|,\Rightarrow^\mathcal{R})$ introduced above in Example 1.
Every partial applicative structure determines a function 
$$[\,]:|\mathcal{R}|\rightarrow \mathsf{Part}(|\mathcal{R}|,|\mathcal{R}|)$$
which sends each $r$ to the partial function $[r]$ of which the domain is the set $\mathsf{Dom}(r):=\{x\in |\mathcal{R}||\,r\cdot_\mathcal{R}x\downarrow\}$ and such that $[r](x)=r\cdot_{\mathcal{R}}x$ for every $x$ in the domain. We will denote with $\mathsf{Im}(r)$ the set $\{r\cdot x|\,x\in \mathsf{Dom}(r)\}$, that is the image of $[r]$. We also recall that a \emph{magma} is a partial applicative structure for which the binary partial function is total.

\noindent From now on, for sake of readability, we will omit subscripts and superscripts, and we will use $\mathcal{R}$ instead of $|\mathcal{R}|$.\\
\subsection{The preorderal and posetal cases}
First we prove that such PR-structures can never be non-trivial and partitioned.
\begin{proposition} Let $\mathcal{R}$ be a partial applicative structure. $\Sigma[\mathcal{R}]$ is partitioned if and only if $\mathcal{R}$ is a singleton.
\end{proposition}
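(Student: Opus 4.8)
The plan is to prove both implications directly from the definition of $\Rightarrow^\mathcal{R}$, the whole argument hinging on what happens when the antecedent set is empty.

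For the direction ($\Rightarrow$), I would test the partitioned hypothesis against the empty proposition. Take $A=\emptyset$. For any $B\subseteq \mathcal{R}$, the condition defining $\emptyset\Rightarrow^\mathcal{R} B$ quantifies over the empty set and is therefore vacuously satisfied by every $r\in \mathcal{R}$; hence $\emptyset\Rightarrow^\mathcal{R} B=\mathcal{R}$. If $\Sigma[\mathcal{R}]$ is partitioned, then $|\rho^{\Sigma[\mathcal{R}]}(\emptyset,B)|=|\emptyset\Rightarrow^\mathcal{R} B|=|\mathcal{R}|\leq 1$. Since $\mathcal{R}$ is non-empty by the definition of a partial applicative structure, this forces $|\mathcal{R}|=1$, that is, $\mathcal{R}$ is a singleton.

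For the converse ($\Leftarrow$), I would simply observe that when $\mathcal{R}$ is a singleton the set of realizers $\mathsf{R}_{\Sigma[\mathcal{R}]}=\mathcal{R}$ has exactly one element. Every value $A\Rightarrow^\mathcal{R} B$ is by definition a subset of $\mathcal{R}$, hence has cardinality at most $1$; thus the partitioned condition $|\rho^{\Sigma[\mathcal{R}]}(A,B)|\leq 1$ holds for all $A,B$, and $\Sigma[\mathcal{R}]$ is partitioned.

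The proof presents essentially no obstacle: the only point that must be handled carefully is the vacuous-quantification identity $\emptyset\Rightarrow^\mathcal{R} B=\mathcal{R}$. This is precisely what makes the empty proposition ``realized by everything'', so that the cardinality of $\rho^{\Sigma[\mathcal{R}]}(\emptyset,B)$ faithfully records the size of the whole realizer set and the partitioned requirement collapses it to a single element.
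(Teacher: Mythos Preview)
Your proof is correct and follows exactly the paper's approach: the key observation is that $\emptyset\Rightarrow^\mathcal{R} B=\mathcal{R}$ for every $B$, which the paper states as the entire justification. You have simply spelled out both implications in more detail than the paper's one-line remark.
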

\begin{proof}
This follows from the fact that, for any partial applicative structure $\mathcal{R}$, $(\emptyset\Rightarrow I)=
\mathcal{R} $ for every $I\subseteq 
\mathcal{R} $.
\end{proof}
Next we can use theorem \ref{preorderal} to characterize those $\Sigma[\mathcal{R}]$ which are preorderal.
\begin{proposition}
Suppose $\mathcal{R}$ is a partial applicative structure. The PR-structure $\Sigma[\mathcal{R}]$ is preorderal if and only if there exists $\mathbf{i}\in 
\mathcal{R} $ such that $\mathbf{i}\cdot  a=a$ for every $a\in 
\mathcal{R} $ and for every $r,s\in 
\mathcal{R} $, there exists $s\Box r\in 
\mathcal{R} $ such that for every $a\in 
\mathcal{R} $, if $ s\cdot  (r\cdot  a)\downarrow$, then $(s\Box r)\cdot  a= s\cdot  (r\cdot  a)$.
%
\end{proposition}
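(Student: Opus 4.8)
The plan is to apply Theorem~\ref{preorderal} directly, translating its two abstract conditions into the concrete language of the partial applicative structure $\mathcal{R}$ via the definition of $\Rightarrow^{\mathcal{R}}$. Recall that for $\Sigma[\mathcal{R}]$ the propositions are subsets of $\mathcal{R}$, the realizers are elements of $\mathcal{R}$, and $r\in\rho^{\Sigma[\mathcal{R}]}(A,B)$ means precisely that for every $a\in A$ we have $r\cdot a\downarrow$ and $r\cdot a\in B$. The whole proof is really an unwinding of what conditions (1) and (2) of Theorem~\ref{preorderal} say once $\rho^{\Sigma[\mathcal{R}]}$ is spelled out, so I would organize it as two implications, each handling one half of the characterization.

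For the forward direction, I would assume $\Sigma[\mathcal{R}]$ is preorderal and invoke Theorem~\ref{preorderal}. Condition (1) gives an $\mathbf{i}\in\mathcal{R}$ with $\mathbf{i}\in\rho^{\Sigma[\mathcal{R}]}(A,A)$ for every $A\subseteq\mathcal{R}$; the key step is to apply this to the singletons $A=\{a\}$ for each $a\in\mathcal{R}$, which forces $\mathbf{i}\cdot a\downarrow$ and $\mathbf{i}\cdot a\in\{a\}$, i.e.\ $\mathbf{i}\cdot a=a$. Condition (2) gives, for each $r,s$, an element $s\Box r$ such that $s\Box r\in\rho^{\Sigma[\mathcal{R}]}(A,C)$ whenever $r\in\rho^{\Sigma[\mathcal{R}]}(A,B)$ and $s\in\rho^{\Sigma[\mathcal{R}]}(B,C)$. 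To extract the stated composition property I would again test on singletons: fix $a$ with $s\cdot(r\cdot a)\downarrow$, set $A=\{a\}$, $B=\{r\cdot a\}$, $C=\{s\cdot(r\cdot a)\}$, and check that $r\in\rho^{\Sigma[\mathcal{R}]}(A,B)$ and $s\in\rho^{\Sigma[\mathcal{R}]}(B,C)$ hold by construction, so that $s\Box r\in\rho^{\Sigma[\mathcal{R}]}(A,C)$ yields $(s\Box r)\cdot a=s\cdot(r\cdot a)$.

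For the converse, I would assume the concrete conditions on $\mathcal{R}$ and verify the two hypotheses of Theorem~\ref{preorderal} for arbitrary $A,B,C\subseteq\mathcal{R}$. Condition (1) is immediate: since $\mathbf{i}\cdot a=a$ for every $a$, we get $\mathbf{i}\cdot a\downarrow$ and $\mathbf{i}\cdot a=a\in A$ for every $a\in A$, hence $\mathbf{i}\in\rho^{\Sigma[\mathcal{R}]}(A,A)$. For condition (2), suppose $r\in\rho^{\Sigma[\mathcal{R}]}(A,B)$ and $s\in\rho^{\Sigma[\mathcal{R}]}(B,C)$, and take $a\in A$; then $r\cdot a\downarrow$ with $r\cdot a\in B$, so $s\cdot(r\cdot a)\downarrow$ with $s\cdot(r\cdot a)\in C$, and the hypothesis gives $(s\Box r)\cdot a=s\cdot(r\cdot a)\in C$, whence $s\Box r\in\rho^{\Sigma[\mathcal{R}]}(A,C)$.

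I expect no serious obstacle here; the statement is essentially a dictionary translation and the real content lives in Theorem~\ref{preorderal}. The one point demanding a little care is the role of partiality in the forward direction: the singleton trick must be set up so that the defining condition of $\Rightarrow^{\mathcal{R}}$ is genuinely witnessed, in particular ensuring the relevant products are defined ($\downarrow$) before asserting membership. This is why testing on singletons is the right move, since it makes the universally quantified clause in the definition of $\rho^{\Sigma[\mathcal{R}]}$ collapse to a single definedness-plus-value statement rather than a condition over all elements of a larger set. Aside from keeping the $\downarrow$ bookkeeping honest, the argument is routine.
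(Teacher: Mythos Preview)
Your proposal is correct and follows essentially the same approach as the paper: both directions invoke Theorem~\ref{preorderal}, with the forward direction extracting the concrete conditions by testing on singletons $\{a\}$, $\{r\cdot a\}$, $\{s\cdot(r\cdot a)\}$, and the converse verifying the abstract conditions for arbitrary subsets by straightforward unwinding of the definition of $\Rightarrow^{\mathcal{R}}$. The partiality bookkeeping you flag is handled exactly as in the paper.
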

\begin{proof}
$(\Rightarrow)$ Suppose that the PR-structure associated to $\mathcal{R}$ is preorderal. Then there exists $\mathbf{i}\in 
\mathcal{R} $ such that $\mathbf{i}\in \rho(P,P)$ for every $P\subseteq 
\mathcal{R} $; in particular this holds for singletons, thus $\mathbf{i}\in \rho(\{a\},\{a\})$ for every $a\in 
\mathcal{R} $, that is $\mathbf{i}\cdot  a=a$. Moreover, for every $r,s\in 
\mathcal{R} $ there must be an element $s\Box r$ such that, for every $P,Q,R\subseteq 
\mathcal{R} $, if $r\in \rho(P,Q)$ and $s\in \rho(Q,R)$, then $s\Box r\in \rho(P,R)$; suppose that $a\in 
\mathcal{R} $ is such that $s\cdot  (r\cdot  a)\downarrow$. Then $s\Box r\in \rho(\{a\},\{s\cdot  (r\cdot  a)\})$, since $r\in \rho(\{a\},\{r\cdot  a\})$ and $s\in \rho(\{r\cdot  a\},\{s\cdot  (r\cdot  a)\})$. Thus $(s\Box r)\cdot  a=s\cdot  (r\cdot  a)$.

$(\Leftarrow)$ Suppose there exist such $\mathbf{i}$ and $s\Box r$ for every $r$ and $s$ and let us prove that they satisfy the requirements in the characterization of preorderal PR-structures. 
Let $P\subseteq 
\mathcal{R} $. For every $a\in P$ clearly $\mathbf{i}\cdot  a=a\in P$. Thus $\mathbf{i}\in \rho(P,P)$ for every $P\subseteq 
\mathcal{R} $. Suppose now that $P,Q,R$ are subsets of $
\mathcal{R} $. If $r\in \rho(P,Q)$ and $s\in \rho(Q,R)$ and $a \in P$, then $r\cdot  a\downarrow$ and $r\cdot  a\in Q$; thus $s\cdot (r\cdot  a)\downarrow$ and $s\cdot  (r\cdot  a)\in 
\mathcal{R} $. Since in this case $s\cdot  (r\cdot  a)=(s\Box r)\cdot  a$, we conclude that $s\Box r\in \rho(P,R)$.
%
%
%
%
\end{proof}
\begin{cor} If $\mathsf{R}_\Sigma $ is a partial applicative structure such that $\Sigma[\mathcal{R}]$ is preoderal, then $\mathcal{F} :=\{f:
\mathcal{R} \rightarrow 
\mathcal{R} |\,\textrm{there exists $r\in 
\mathcal{R} $ such that }f=[r]\}$
gives rise to a monoid together with composition of functions and the identity function. Moreover for every $r,s\in 
\mathcal{R} $, there exists $t\in 
\mathcal{R} $ such that $[t]\supseteq [s]\circ [r]$.
\end{cor}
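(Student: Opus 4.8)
The plan is to read the monoid structure directly off the characterization of preorderality given in the preceding proposition, whose two distinguished elements $\mathbf{i}$ and $s\Box r$ will furnish the unit and the composition, respectively. First I would identify the unit: since preorderality gives an $\mathbf{i}\in\mathcal{R}$ with $\mathbf{i}\cdot a=a$ for every $a\in\mathcal{R}$, the partial function $[\mathbf{i}]$ is the total function $\mathsf{id}_\mathcal{R}$, so $\mathsf{id}_\mathcal{R}\in\mathcal{F}$. This is the candidate identity of the monoid.

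Next I would verify that $\mathcal{F}$ is closed under composition of functions. Take $f,g\in\mathcal{F}$, say $f=[r]$ and $g=[s]$ with both $[r]$ and $[s]$ total. The characterization supplies an element $s\Box r$ satisfying $(s\Box r)\cdot a=s\cdot(r\cdot a)$ whenever $s\cdot(r\cdot a)\downarrow$. The crucial observation is that totality of $[r]$ forces $r\cdot a\downarrow$, and then totality of $[s]$ forces $s\cdot(r\cdot a)\downarrow$, for \emph{every} $a\in\mathcal{R}$; hence $(s\Box r)\cdot a=s\cdot(r\cdot a)=(g\circ f)(a)$ for all $a$, so $[s\Box r]$ is total and equal to $g\circ f$. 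Thus $g\circ f\in\mathcal{F}$. Since composition of functions is associative and $\mathsf{id}_\mathcal{R}$ is a two-sided unit, $(\mathcal{F},\circ,\mathsf{id}_\mathcal{R})$ is a monoid, and the associativity and unit laws come for free from the ambient monoid of total endofunctions of $\mathcal{R}$.

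For the ``moreover'' clause I would drop the totality hypothesis and again set $t:=s\Box r$. The composite of partial functions $[s]\circ[r]$ has domain $\{a\mid r\cdot a\downarrow \text{ and } s\cdot(r\cdot a)\downarrow\}$, and for any $a$ in this domain the defining property of $s\Box r$ applies, giving $t\cdot a=s\cdot(r\cdot a)=([s]\circ[r])(a)$; in particular $a\in\mathsf{Dom}(t)$ and the two values agree, so $[t]\supseteq[s]\circ[r]$.

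The only point that needs genuine care is the equality $[s\Box r]=g\circ f$ in the monoid part, as opposed to the mere extension asserted in the moreover clause. The defining property of $s\Box r$ is conditional on $s\cdot(r\cdot a)\downarrow$, so it constrains $s\Box r$ only on the subdomain where the right-hand side is defined; it is precisely the totality of both $[r]$ and $[s]$ that makes this condition hold at every $a$, forcing $\mathsf{Dom}(s\Box r)=\mathcal{R}$ and upgrading the extension into an equality of total functions. This is the step I expect to be the main obstacle, and it is where the two halves of the statement genuinely diverge.
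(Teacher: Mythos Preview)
Your proof is correct and is exactly the intended argument: the paper states this corollary without proof, treating it as an immediate consequence of the preceding proposition, and your derivation from the elements $\mathbf{i}$ and $s\Box r$ is precisely the natural unpacking of that implication. Your care in distinguishing why totality of $[r]$ and $[s]$ upgrades the extension $[s\Box r]\supseteq [s]\circ[r]$ to an equality of total functions is the only nontrivial point, and you handle it correctly.
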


\begin{theorem} If $\mathcal{R}$ is a partial applicative structure such that $\Sigma[\mathcal{R}]$ is posetal and $r\in 
\mathcal{R} $, then for every $s\in 
\mathcal{R} $, 
$r\cdot_\mathcal{R} s=s$ or there exists a natural number $n$ such that $[r]^n(s)\not\downarrow$ and $[r]^i(s)\neq [r]^j(s)$ for every $i,j<n$ such that $i\neq j$.
\end{theorem}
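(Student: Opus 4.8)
The plan is to exploit the two order‑theoretic facts packaged in the hypothesis that $\Sigma[\mathcal{R}]$ is posetal: by the characterization of posetal PR‑structures, $\Sigma[\mathcal{R}]$ is in particular preorderal (so the identity combinator $\mathbf{i}$ exists and, by the previous corollary, iterated compositions of $[r]$ are dominated by a representable partial function), and the relation $\vdash^{\Sigma}$ on $\mathcal{P}(\mathcal{R})$ is antisymmetric. Fix $r,s\in\mathcal{R}$ and write $s_i:=[r]^i(s)$ for the orbit of $s$ under $[r]$. I would then split according to whether this orbit is everywhere defined, and push everything through antisymmetry of $\vdash^{\Sigma}$, recalling that for $A,B\subseteq\mathcal{R}$ one has $A\vdash^{\Sigma}B$ exactly when some single element of $\mathcal{R}$ sends every point of $A$ (with convergence) into $B$.

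First I would treat the easy case where $s_n\not\downarrow$ for some $n$; take the least such $n$, so that $s_0,\dots,s_{n-1}$ are all defined. These must be pairwise distinct: if $s_i=s_j$ with $i<j\le n-1$, then applying $[r]$ repeatedly gives $s_{i+\ell}=s_{j+\ell}$ for all $\ell\ge 0$, so the orbit stays inside the finite set $\{s_i,\dots,s_{j-1}\}$ and never becomes undefined, contradicting $s_n\not\downarrow$. This yields exactly the second disjunct of the statement.

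The substantive case is when the orbit is everywhere defined. Here I would run a \emph{shift argument}: set $O:=\{s_i\mid i\ge 0\}$ and $O':=\{s_i\mid i\ge 1\}$. Since $r\cdot s_i=s_{i+1}\in O'$ for every $i$, the element $r$ witnesses $O\vdash^{\Sigma}O'$; since $O'\subseteq O$, the identity $\mathbf{i}$ witnesses $O'\vdash^{\Sigma}O$. By antisymmetry $O=O'$, hence $s_0\in O'$, i.e.\ $s_0=s_k$ for some least $k\ge 1$; one checks this forces pure periodicity with period $k$ and that $s_0,\dots,s_{k-1}$ are distinct. If $k=1$ then $r\cdot s=s$, the first disjunct. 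To exclude $k\ge 2$ I would descend to singletons: $\{s_0\}\vdash^{\Sigma}\{s_1\}$ is witnessed by $r$, while $s_0=s_k=[r]^{k-1}(s_1)$ together with the previous corollary yields a single $t\in\mathcal{R}$ with $[t]\supseteq[r]^{k-1}$, whence $t\cdot s_1=s_0$ and $\{s_1\}\vdash^{\Sigma}\{s_0\}$. Antisymmetry then gives $s_0=s_1$, contradicting $k\ge 2$.

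I expect the main obstacle to be precisely the elimination of non‑trivial cycles ($k\ge 2$). The clean set‑level shift trick cannot detect them, because a cycle is invariant as a set under $[r]$ (so the shifted set coincides with the original and antisymmetry gives no information); the crux is therefore to manufacture the single realizer $t$ implementing the ``return map'' $[r]^{k-1}$ on the cycle, which is exactly what the representability‑of‑composition corollary from the preorderal case supplies, and which reduces the whole theorem to antisymmetry of $\vdash^{\Sigma}$ on singletons.
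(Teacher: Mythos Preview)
Your proof is correct and complete, but it follows a genuinely different route from the paper's own argument. The paper analyzes the entire functional graph of $[r]$: it first shows that any directed cycle collapses to a loop (using singletons and composition realizers, just as you do for $k\ge 2$), then classifies connected components of the underlying undirected graph, and finally disposes of loop--free rootless components via a $2$--coloring argument (partition the tree into two colors $A,B$ with $[r](A)\subseteq B$ and $[r](B)\subseteq A$, conclude $A=B$ by antisymmetry, contradict disjointness). The paper also separately argues that a loop cannot have anything feeding into it, using $\{x,y\}\vdash\{x\}\vdash\{x,y\}$.

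Your approach is purely orbit--local and replaces all of this graph theory with the single shift observation $O\vdash^{\Sigma}O'\vdash^{\Sigma}O$, forcing $O=O'$ and hence $s_0=s_k$. This simultaneously eliminates infinite non--returning orbits (the paper's rootless--tree case) and eventually--but--not--purely periodic orbits (the paper's ``loop with an extra vertex'' case), which the paper handles by two separate tricks. What the paper's presentation buys is a structural picture of the whole functional graph of $[r]$ at once; what your argument buys is brevity and the avoidance of any appeal to $2$--colorability or connected--component analysis. Both rest on the same two ingredients: antisymmetry of $\vdash^{\Sigma}$ and the representability of iterated compositions guaranteed by preorderality.
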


\begin{proof}
Let $r$ be an element of $
\mathcal{R} $ and consider the directed graph with loops determined by the function $[r]$, that is the one having as vertices the elements of $
\mathcal{R} $ and in which there is an edge from $x$ to $y$ if and only if $[r](x)=y$. Let us first take a look to the cycles in this graph. Let $x_1,...x_n$ be distinct vertices forming a cycle in the graph. Without loss of generality this means that $[r](x_i)=x_{i+1}$ for $i=1,...,n-1$ and $[r](x_n)=x_1$. In particular, this means that, since for every $i\in \mathbb{N}$ there exists $s\in 
\mathcal{R} $ with $[s]\supseteq [r]^i$, $\{x_i\}\vdash \{x_j\}$ and $\{x_j\}\vdash \{x_i\}$ for every $i,j=1,...,n$ from which it follows, by antisimmetry of $\vdash$, that $\{x_i\}=\{x_j\}$, that is $x_i=x_j$ for every $i,j=1,...,n$. Since we assumed that $x_1,...,x_n$ were distinct, then $n=1$, and the only possible cycle is a loop. 

Let us now consider the connected components of the graph obtained by not considering the directions of the edges. Let $X$ be a connected component. If $X$ contains a loop on $x$ there cannot be any other element $y\in X$; indeed if $[r](y)=x$, then $\{x,y\}\vdash \{x\}$ and $\{x\}\vdash \{x,y\}$ (since in the case of a preorderal $\Sigma[{\mathcal{R}}]$, the relation $\vdash$ is an extension of the inclusion relation $\subseteq$); by the antisymmetry of $\vdash$ we conclude that $\{x,y\}=\{x\}$, and hence that $x=y$. 
Thus the components containing a loop consist just of that loop.

Suppose that the component $X$ has no loops. We distinguish two cases:
\begin{enumerate}
\item $X$ contains at least a root $x$, that is a vertex such that $\delta_{out}(x)=0$, that is, in our case, $r\cdot  x\not\downarrow$. In this case $X$ contains exactly one root, since if there were two, $x$ and $x'$, then the path connecting them in the underlying non-directed graph would provide a vertex $y$ with $r\cdot_{\mathcal{R}} y$ having two distinct values, a contraddiction. In such a connected component every vertex $y$ is connected by a path of minimal lenght to $x$. This path is of the form $y$,$[r](y)$....,$[r]^{n-1}(y)=x$ for some $n\in \mathbb{N}$ and, by what we proved about loops and by minimality, $[r]^{i}(y)\neq [r]^j(y)$ for every $i\neq j$ with $i,j<n$; moreover $[r]^{n}(y)\not \downarrow$. 
\item $X$ contains no roots. Since $X$ is a tree, then $X$ is $2$-colourable. Consider the partition $(A,B)$ of the vertices of $X$ determined by a 2-coloration. Clearly $A,B\subseteq \mathsf{Dom}(r)$, $[r](A)\subseteq B$ and $[r](B)\subseteq A$, that is $A\vdash B$ and $B\vdash A$. Thus $A=B$. But $A\cap B=\emptyset$. Thus $X=\emptyset$, which is a contradiction. 
\end{enumerate}
We can hence conclude.\end{proof}
\begin{cor} If $\mathcal{R}$ is a partial applicative structure such that $\Sigma[\mathcal{R}]$ is posetal, $r\in 
\mathcal{R} $ and $[r]$ is total, then $[r]$ coincides with the identity function $\mathsf{id}_{
\mathcal{R} }$.
\end{cor}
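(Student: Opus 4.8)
The plan is to derive this directly from the preceding theorem, reading off which of its two alternatives can survive under the hypothesis of totality. The theorem tells us that for the given $r$ and every $s\in\mathcal{R}$, either $r\cdot_\mathcal{R} s=s$, or there is a natural number $n$ with $[r]^n(s)\not\downarrow$ (together with the distinctness of the finite orbit, which I will not even need). So the whole argument reduces to showing that the second alternative is vacuous when $[r]$ is total.

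First I would record that totality of $[r]$ propagates to all its iterates. Since $[r]$ being total means $\mathsf{Dom}(r)=\mathcal{R}$, i.e.\ $r\cdot_\mathcal{R} x\downarrow$ for every $x\in\mathcal{R}$, a straightforward induction on $n$ gives that $[r]^n$ is total for every $n$: the base case is the totality of $[r]$, and for the inductive step $[r]^{n+1}(s)=[r]\big([r]^n(s)\big)$ is defined because $[r]^n(s)$ is defined by the inductive hypothesis and $[r]$ is defined on all of $\mathcal{R}$. Hence $[r]^n(s)\downarrow$ for every $s\in\mathcal{R}$ and every $n\in\mathbb{N}$, so no $s$ can satisfy the condition ``there exists $n$ with $[r]^n(s)\not\downarrow$''.

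Consequently, for each $s\in\mathcal{R}$ the theorem forces the first alternative, namely $r\cdot_\mathcal{R} s=s$, that is $[r](s)=\mathsf{id}_{\mathcal{R}}(s)$. As $s$ was arbitrary and both $[r]$ and $\mathsf{id}_{\mathcal{R}}$ are defined everywhere, I would conclude $[r]=\mathsf{id}_{\mathcal{R}}$. There is essentially no obstacle here; the only point deserving care is the elementary induction establishing totality of the iterates, which is exactly what removes the ``undefined'' branch of the theorem and leaves the fixed-point branch as the only possibility.
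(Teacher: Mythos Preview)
Your argument is correct and is exactly the intended one: the paper states this as an immediate corollary of the preceding theorem, and your proof spells out precisely that deduction---totality of $[r]$ forces totality of every iterate $[r]^n$, so the ``undefined'' branch of the dichotomy cannot occur and only the fixed-point branch $r\cdot_\mathcal{R} s=s$ remains.
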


\begin{cor} $\mathcal{R}$ is a magma such that $\Sigma[\mathcal{R}]$ is posetal if and only if $x\cdot_\mathcal{R}y=y$ for every $x,y\in 
\mathcal{R} $. In particular the PR-structure associated to $\mathcal{R}$ is a $P$-structure equivalent to that induced by the complete Boolean algebra $(\mathcal{P}(\mathcal{R}),\subseteq)$.
\end{cor}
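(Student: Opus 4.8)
The plan is to prove the biconditional corollary by establishing both directions, leaning on the preceding theorem which tells us that when $\Sigma[\mathcal{R}]$ is posetal, the orbit of any element $r$ under iterated application of $[r]$ either stabilizes immediately as a fixed point or eventually runs off the domain. For a magma, however, application is \emph{total}: every $[r]$ is a total function. The preceding corollary already records that in the posetal case a total $[r]$ must equal $\mathsf{id}_{\mathcal{R}}$. So the forward direction is almost immediate from that corollary: if $\mathcal{R}$ is a magma and $\Sigma[\mathcal{R}]$ is posetal, then for each $x\in\mathcal{R}$ the map $[x]$ is total, hence $[x]=\mathsf{id}_{\mathcal{R}}$, which unwinds to $x\cdot_{\mathcal{R}}y=[x](y)=y$ for all $x,y\in\mathcal{R}$.

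For the converse, I would assume $x\cdot_{\mathcal{R}}y=y$ for all $x,y\in\mathcal{R}$ and verify directly that $\Sigma[\mathcal{R}]$ is posetal. Since application is total, $\mathcal{R}$ is a magma. To show posetality I would use the earlier characterization theorems: it suffices to exhibit the data making $\Sigma[\mathcal{R}]$ preorderal and to check antisymmetry of $\vdash^{\Sigma[\mathcal{R}]}$. Under the hypothesis $x\cdot y=y$, every element acts as the identity, so I can take $\mathbf{i}$ to be any element of $\mathcal{R}$, and for $r,s$ take $s\Box r$ to be any element as well, since $s\cdot(r\cdot a)=s\cdot a=a=(s\Box r)\cdot a$ holds automatically. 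This gives the preorderal conditions. For antisymmetry, observe that $A\Rightarrow^{\mathcal{R}}B=\{r\mid r\cdot a\in B\text{ for all }a\in A\}=\{r\mid a\in B\text{ for all }a\in A\}$, which is all of $\mathcal{R}$ if $A\subseteq B$ and empty otherwise; hence $A\vdash^{\Sigma[\mathcal{R}]}B$ iff $A\subseteq B$, and antisymmetry of $\subseteq$ gives antisymmetry of $\vdash$.

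This same computation settles the final assertion: since $A\vdash^{\Sigma[\mathcal{R}]}B$ holds exactly when $A\subseteq B$, the relation $\vdash^{\Sigma[\mathcal{R}]}$ \emph{is} the inclusion order on $\mathcal{P}(\mathcal{R})$. I would then identify this with the PR-structure induced by the complete Boolean algebra $(\mathcal{P}(\mathcal{R}),\subseteq)$ via Example 1.1, noting that both give rise to the same entailment relation $\vdash_I$ at every index $I$. To confirm it is a P-structure, I would invoke the earlier characterization of P-structures: one checks that $\varphi\vdash^{\Sigma[\mathcal{R}]}_I\psi$ holds iff $\varphi(i)\vdash^{\Sigma[\mathcal{R}]}\psi(i)$ for every $i$, which here reduces to $\bigcap_i(\varphi(i)\Rightarrow^{\mathcal{R}}\psi(i))\neq\emptyset$ iff each $\varphi(i)\subseteq\psi(i)$ — and since each nonempty factor equals all of $\mathcal{R}$, the intersection is nonempty precisely when no factor is empty, i.e.\ precisely when the pointwise condition holds.

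The main obstacle is really bookkeeping rather than depth: the forward direction is a clean appeal to the previous corollary, and the converse is a routine verification once one computes $A\Rightarrow^{\mathcal{R}}B$ explicitly. The only point requiring a little care is confirming that the degree collapses to $1$, i.e.\ that the realizer set can genuinely be reduced to a singleton; this follows because every pair that entails does so with \emph{all} of $\mathcal{R}$ as realizers, so Lemma \ref{lemma1} lets us discard all but one realizer, yielding $\delta(\Sigma[\mathcal{R}])=1$ and confirming the P-structure claim independently of the characterization above.
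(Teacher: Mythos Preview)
Your proposal is correct and follows the route the paper intends: the corollary is stated without proof precisely because the forward direction is immediate from the preceding corollary (every $[x]$ is total in a magma, hence equals $\mathsf{id}_{\mathcal{R}}$), and the converse together with the P-structure identification are direct computations of $A\Rightarrow^{\mathcal{R}}B$ under the hypothesis $x\cdot y=y$. You have simply written out the details the paper leaves implicit.
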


\begin{cor} The unique partial combinatory algebra $\mathcal{R}$ such that $\Sigma[\mathcal{R}]$ is posetal is the trivial one.

\end{cor}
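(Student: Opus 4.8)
The plan is to prove both directions of the characterization: that the trivial partial combinatory algebra indeed yields a posetal PR-structure, and --- the substantive direction --- that no non-trivial pca can. For the latter I would assume $\Sigma[\mathcal{R}]$ is posetal and exploit the combinator $\mathsf{k}$, whose defining property $\mathsf{k}\cdot a\cdot b = a$ forces $\mathsf{k}\cdot a$ to be defined for every $a\in\mathcal{R}$; consequently $[\mathsf{k}]$ is a \emph{total} map, and this is precisely the hypothesis under which the preceding corollaries apply.

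The key steps run as follows. First, since $[\mathsf{k}]$ is total and $\Sigma[\mathcal{R}]$ is posetal, the preceding corollary (a total realizer acts as the identity) gives $[\mathsf{k}]=\mathsf{id}_{\mathcal{R}}$, i.e.\ $\mathsf{k}\cdot a = a$ for every $a\in\mathcal{R}$. Substituting this into the defining equation of $\mathsf{k}$ yields
$$a = \mathsf{k}\cdot a\cdot b = (\mathsf{k}\cdot a)\cdot b = a\cdot b$$
for all $a,b\in\mathcal{R}$; in particular every application is defined and $[a]$ is the constant function with value $a$. But then $[a]$ is itself total, so a second application of the same corollary gives $[a]=\mathsf{id}_{\mathcal{R}}$, that is $a\cdot b = b$ for all $a,b$. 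Comparing the two identities $a\cdot b = a$ and $a\cdot b = b$ forces $a=b$ for all $a,b\in\mathcal{R}$, so $\mathcal{R}$ is a singleton, hence the trivial pca.

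For the converse, the trivial pca $\mathcal{R}=\{\ast\}$ satisfies $x\cdot y = y$ trivially, so by the corollary characterizing posetal magmas it does give a posetal PR-structure; this simultaneously confirms that such an $\mathcal{R}$ exists. The argument is essentially immediate once the two corollaries are in place, so I do not expect a serious obstacle; the only point requiring care is the bookkeeping of definedness --- specifically, verifying that $\mathsf{k}\cdot a\downarrow$ for every $a$ (which is built into the notion of pca) so that $[\mathsf{k}]$ genuinely qualifies as a total map, and that the substitution $\mathsf{k}\cdot a\cdot b=(\mathsf{k}\cdot a)\cdot b$ is legitimate once $\mathsf{k}\cdot a=a$ has been established.
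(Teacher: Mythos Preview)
Your proof is correct and follows essentially the same approach as the paper: both arguments use the preceding corollary on the total map $[\mathbf{k}]$ to obtain $\mathbf{k}\cdot a = a$, then combine this with the defining equation $(\mathbf{k}\cdot a)\cdot b = a$ to collapse $\mathcal{R}$. The only cosmetic difference is that the paper finishes in one step by instantiating $a=\mathbf{i}$ (so that $\mathbf{i}=(\mathbf{k}\cdot\mathbf{i})\cdot b=\mathbf{i}\cdot b=b$ for all $b$), whereas you invoke the corollary a second time on the now-total $[a]$; both routes are equally valid and equally short.
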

\begin{proof}
From $\mathbf{k}=\mathbf{i}$ it follows that $\mathbf{i}=(\mathbf{k}\cdot_\mathcal{R}\mathbf{i})\cdot_\mathcal{R}a=(\mathbf{i}\cdot_\mathcal{R}\mathbf{i})\cdot_\mathcal{R}a=a$ for every $a$.
\end{proof}

\begin{cor} If a partial applicative structure $\mathcal{R}$ admits a representation of pairs given by a pairing combinator $\mathbf{p}\in 
\mathcal{R} $ with projections $\mathbf{p}_0,\mathbf{p}_1\in 
\mathcal{R} $, that is 
\begin{enumerate}
\item $(\mathbf{p}\cdot_\mathcal{R}a_0)\cdot_\mathcal{R}a_1\downarrow$ for every $a_0,a_1\in 
\mathcal{R} $
\item $\mathbf{p}_i\cdot ((\mathbf{p}\cdot_\mathcal{R}a_0)\cdot_\mathcal{R}a_1)=a_i$ for every $a_0,a_1\in 
\mathcal{R} $, $i=0,1$,
\end{enumerate}
then $\mathcal{R}$ is trivial.
\end{cor}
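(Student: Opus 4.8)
The plan is to carry out the argument under the hypothesis, tacitly presupposed throughout this subsection, that $\Sigma[\mathcal{R}]$ is posetal, and to show that under that assumption the existence of a pairing combinator forces the carrier of $\mathcal{R}$ to be a singleton. The whole proof rests on a single observation about $\mathbf{p}$ combined with the earlier corollary stating that, for a posetal $\Sigma[\mathcal{R}]$, any total $[r]$ must equal $\mathsf{id}_\mathcal{R}$.

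First I would observe that condition (1) makes, for each fixed $a_0\in\mathcal{R}$, the element $\mathbf{p}\cdot a_0$ defined (since $(\mathbf{p}\cdot a_0)\cdot a_1$ can only be formed once $\mathbf{p}\cdot a_0\downarrow$) and the partial function $[\mathbf{p}\cdot a_0]$ total, because by hypothesis $(\mathbf{p}\cdot a_0)\cdot a_1\downarrow$ for every $a_1\in\mathcal{R}$. Invoking the corollary that a total $[r]$ coincides with the identity, I conclude $[\mathbf{p}\cdot a_0]=\mathsf{id}_\mathcal{R}$, that is $(\mathbf{p}\cdot a_0)\cdot a_1=a_1$ for all $a_0,a_1\in\mathcal{R}$. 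Feeding this back into the projection axiom (2) with $i=0$ gives, for all $a_0,a_1$,
\[
a_0=\mathbf{p}_0\cdot\big((\mathbf{p}\cdot a_0)\cdot a_1\big)=\mathbf{p}_0\cdot a_1 .
\]
Now fixing any single $a_1$, the right-hand side $\mathbf{p}_0\cdot a_1$ is one fixed element of $\mathcal{R}$, yet it equals $a_0$ for every choice of $a_0$; hence all elements of $\mathcal{R}$ coincide and $\mathcal{R}$ is trivial.

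I do not expect a genuine obstacle: once one spots that the curried pairing map $[\mathbf{p}\cdot a_0]$ is total and therefore, under posetality, the identity, the collapse is immediate. The only point deserving care is that the cited corollary (total implies identity) is unavailable without the posetality of $\Sigma[\mathcal{R}]$, so I would state this assumption explicitly at the outset, matching the reading of the preceding corollaries in this subsection.
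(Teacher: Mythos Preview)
Your proof is correct and follows essentially the same strategy as the paper: both invoke the earlier corollary that, under the posetality assumption on $\Sigma[\mathcal{R}]$, any total $[r]$ coincides with $\mathsf{id}_\mathcal{R}$. The only cosmetic difference is where the corollary is applied first: the paper observes that $[\mathbf{p}]$ is total, hence $\mathbf{p}\cdot a_0=a_0$, which makes every $[a_0]$ total (so $\mathcal{R}$ is a magma and $a_0\cdot a_1=a_1$), and then runs the chain $a_0=\mathbf{p}_0\cdot((\mathbf{p}\cdot a_0)\cdot a_1)=a_0\cdot a_1=a_1$; you instead apply the corollary directly to $[\mathbf{p}\cdot a_0]$ and conclude $a_0=\mathbf{p}_0\cdot a_1$ for all $a_0$. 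Your route is marginally shorter since it avoids the detour through ``$\mathcal{R}$ is a magma,'' but the underlying idea is identical.
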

\begin{proof}
For every $a_0,a_1\in 
\mathcal{R} $, $a_0=\mathbf{p}_0\cdot ((\mathbf{p}\cdot_\mathcal{R}a_0)\cdot_\mathcal{R}a_1)=a_0\cdot a_1=a_1$, since $[a_0]$ must be total. 
\end{proof}

Using theorem \ref{bound} we can prove the following:

\begin{proposition}
Let $\mathcal{R}=(
\mathcal{R} ,\cdot_\mathcal{R})$ be a partial applicative structure such that $\Sigma[\mathcal{R}]$ is posetal. Then $\Sigma[\mathcal{R}]$ is bounded posetal.
\end{proposition}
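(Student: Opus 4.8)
The plan is to apply Theorem~\ref{bound}: since $\Sigma[\mathcal{R}]$ is already assumed posetal, it suffices to produce bottom and top propositions $\bot,\top\in\mathcal{P}(\mathcal{R})$ together with realizers $\mathbf{b},\mathbf{t}\in\mathcal{R}$ such that $\mathbf{b}\in\rho(\bot,a)$ and $\mathbf{t}\in\rho(a,\top)$ for every $a\in\mathcal{P}(\mathcal{R})$.

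For the bottom I would take $\bot:=\emptyset$. By the fact already recorded in this section that $\emptyset\Rightarrow I=\mathcal{R}$ for every $I\subseteq\mathcal{R}$ (the defining condition of $\Rightarrow$ is vacuously satisfied on the empty antecedent), every element of the non-empty set $\mathcal{R}$ lies in $\rho(\emptyset,a)$ for all $a$. Thus I may fix an arbitrary $\mathbf{b}\in\mathcal{R}$, and the bottom requirement of Theorem~\ref{bound} holds uniformly in $a$.

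For the top I would take $\top:=\mathcal{R}$. Unwinding the definition, $\rho(a,\mathcal{R})=\{r\in\mathcal{R}\mid r\cdot x\downarrow\text{ for every }x\in a\}$, since the further clause $r\cdot x\in\mathcal{R}$ is automatic once $r\cdot x\downarrow$. Hence a single realizer witnessing $\mathbf{t}\in\rho(a,\mathcal{R})$ for \emph{every} $a$ must have a total associated partial function. This is exactly where posetality is used: being posetal, $\Sigma[\mathcal{R}]$ is in particular preorderal, so by the characterization of preorderal $\Sigma[\mathcal{R}]$ there exists $\mathbf{i}\in\mathcal{R}$ with $\mathbf{i}\cdot x=x$ for every $x\in\mathcal{R}$, whose associated function $[\mathbf{i}]$ is the identity and thus total. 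Setting $\mathbf{t}:=\mathbf{i}$ gives $\mathbf{t}\cdot x=x\downarrow$ for all $x$, so $\mathbf{t}\in\rho(a,\mathcal{R})$ for every $a\subseteq\mathcal{R}$.

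With $\bot,\top,\mathbf{b},\mathbf{t}$ in hand, Theorem~\ref{bound} delivers that $\Sigma[\mathcal{R}]$ is bounded-posetal. The only genuine choice to be made is the top realizer: the minimum side is free because $\emptyset$ absorbs every consequent, whereas the maximum side demands a \emph{total} realizer, and the sole nontrivial ingredient available is the identity combinator provided by preorderality. So the main (and essentially only) point of the argument is to recognize that the identity $\mathbf{i}$ supplied by the preorderal structure is precisely the total map needed to realize $a\vdash^{\Sigma}\mathcal{R}$ uniformly in $a$.
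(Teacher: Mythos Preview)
Your proof is correct and follows essentially the same route as the paper: take $\bot=\emptyset$, $\top=\mathcal{R}$, and invoke Theorem~\ref{bound} with the identity combinator $\mathbf{i}$ supplied by preorderality as the uniform realizer. The only cosmetic difference is that the paper sets $\mathbf{b}=\mathbf{t}=\mathbf{i}$, whereas you observe (correctly) that any element of $\mathcal{R}$ works for $\mathbf{b}$ since $\emptyset\Rightarrow I=\mathcal{R}$.
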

\begin{proof}
The minimum in $(\mathcal{P}(
\mathcal{R} ),\vdash)$ is $\emptyset$ and the maximum in $(\mathcal{P}(
\mathcal{R} ),\vdash)$ is $
\mathcal{R} $, since $\vdash$ extends $\subseteq$. The thesis follows by putting $\mathbf{b}=\mathbf{t}=\mathbf{i}$ in the statement of theorem \ref{bound} where $\mathbf{i}$ is such that $\mathbf{i}\cdot_{\mathcal{R}}x=x$ for every $x\in 
\mathcal{R} $.
\end{proof}

\subsection{On completeness of fibers of $\Sigma[\mathcal{R}]$}

We need first to give the following 

\begin{definition} Let $R$ be a binary relation on a set $A$ and let $(a_i)_{i\in I}$ be a set-indexed family of elements of $A$. An element $b\in A$ is 
\begin{enumerate}
\item a \emph{supremum} for $(a_i)_{i\in I}$ if 
\begin{enumerate}
\item $R(a_i,b)$ for every $i\in I$;
\item if $R(a_i,c)$ for every $i\in I$, then $R(b,c)$; 
\end{enumerate}
\item an \emph{adjoint-supremum} for $(a_i)_{i\in I}$ if for every $c\in A$
$$\left[R(a_i,c)\textnormal{ for every $i\in I$}\right]\textnormal{ if and only if }R(b,c).$$
\end{enumerate}
The binary relation $R$ is \emph{complete} (resp.\ \emph{adjoint-complete}) if every set-indexed family of elements of $A$ has a supremum (resp.\ adjoint supremum). 
\end{definition}
\begin{remark}
If $R$ is transitive and $b$ is a supremum for $(a_i)_{i\in I}$, then $b$ is also an adjoint-supremum. On the contrary, if $R$ is reflexive and $b$ is an adjoint-supremum for $(a_i)_{i\in I}$, then $b$ is also a supremum. In particular, if $R$ is a preorder on $A$, then the notions of supremum and adjoint-supremum coincide and, if they exist, they are unique up to isomorphism (that is, if $b$ and $b'$ are suprema of the same family, then $R(b,b')$ and $R(b',b)$).
\end{remark}

As we have already said, every element $r$ of a partial applicative structure naturally represents a function $[r]$ with domain $\mathsf{Dom}(r)$ and image $\mathsf{Im}(r)$ sending each $x$ to $r\cdot x$.
The following lemma shows that there is always a partial function of a certain kind which is not representable.
\begin{lemma}\label{lemmaext} Let $(\mathcal{R},\cdot)$ be a partial applicative structure and let $I$ be a set such that $|\mathcal{R}^I|>
\mathcal{R} $. Suppose $(X)_{i\in I}$ is a family of pairwise disjoint non-empty subsets of $\mathcal{R}$. Then, there exists a function $\varphi:\bigcup_{i\in I}X_i\rightarrow \mathcal{R}$ such that
\begin{enumerate}
\item for every $i \in I$ and every $x,y\in X_i$, $\varphi(x)=\varphi(y)$;
\item there is no $r\in \mathcal{R}$ such that for every $i\in I$ and $x\in X_i$, $r\cdot x\downarrow$ and $r\cdot x=\varphi(x)$.
\end{enumerate}
\end{lemma}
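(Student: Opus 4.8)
The plan is a straightforward diagonalization by cardinality. First I would note that a function $\varphi$ on $\bigcup_{i\in I}X_i$ satisfying condition (1) carries exactly the same information as a choice of one value in $\mathcal{R}$ for each block. Since the $X_i$ are non-empty and pairwise disjoint, every $x\in\bigcup_{i\in I}X_i$ lies in a unique block $X_i$, so a $\varphi$ as in (1) is determined by, and determines, the family $(v_i)_{i\in I}$ where $v_i\in\mathcal{R}$ is the common value of $\varphi$ on $X_i$. This assignment $\varphi\mapsto(v_i)_{i\in I}$ is a bijection onto $\mathcal{R}^I$ (injectivity uses that each $X_i\neq\emptyset$, so distinct families yield distinct functions). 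Hence there are exactly $|\mathcal{R}^I|$ functions satisfying condition (1).

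Next I would bound the number of such $\varphi$ that \emph{fail} condition (2), i.e.\ the representable ones. Suppose some $r\in\mathcal{R}$ witnesses the failure of (2) for a given $\varphi$, so that $r\cdot x\downarrow$ and $r\cdot x=\varphi(x)$ for every $i\in I$ and every $x\in X_i$. Choosing any representative $x_i\in X_i$ (possible since $X_i\neq\emptyset$), we get $v_i=r\cdot x_i$ for each $i$, so $\varphi$ is entirely determined by $r$. Thus the map sending a witness $r$ to the function it represents is a surjection from a subset of $\mathcal{R}$ onto the set of representable functions, and therefore the representable functions number at most $|\mathcal{R}|$.

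Finally, since by hypothesis $|\mathcal{R}^I|>|\mathcal{R}|$, the at most $|\mathcal{R}|$ representable functions cannot exhaust the $|\mathcal{R}^I|$ functions satisfying (1); any $\varphi$ meeting condition (1) but not lying in the representable set satisfies both (1) and (2), and is the function sought. The proof is purely a counting argument, and I do not anticipate a genuine obstacle: the only delicate point is checking that the correspondence in the first step is an honest bijection, which is precisely where the non-emptiness and pairwise disjointness of the $X_i$ are needed.
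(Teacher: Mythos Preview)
Your argument is correct and is exactly the approach the paper takes: the paper's proof is the one-line observation that the set of functions satisfying condition~(1) has cardinality $|\mathcal{R}^I|$, from which the conclusion follows since at most $|\mathcal{R}|$ of them can be represented by some $r\in\mathcal{R}$. You have simply spelled out the details of this counting argument.
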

\begin{proof} The result follows immediately from 
$|\{\varphi:\bigcup_{i\in I}X_i\rightarrow \mathcal{R}|\,1.\textnormal{ holds}\}|=|\mathcal{R}^I|$.
\end{proof}

In $\mathsf{ZFC}$, every $I$ having cardinality greater than or equal to the cofinality $\mathsf{cf}(
\mathcal{R} )$ of the cardinality of $\mathcal{R}$ satisfies the hypothesis of the previous lemma. If in addition the generalized continuum hypothesis holds, then the two conditions are equivalent. 

\begin{definition} A partial applicative structure $\mathcal{R}$ is \emph{totally matching} if for every $x,y\in \mathcal{R}$ there exists $r$ such that $r\cdot x\downarrow $ and $r\cdot x=y$.
\end{definition}
\begin{remark} A totally matching non-trivial partial applicative structure can never give rise to a posetal PR-structure. Indeed, if $\mathcal{R}$ is \emph{totally matching}, then $\{x\}\vdash \{y\}$ for every pair of singletons.
\end{remark}
\begin{example} Every partial combinatory algebra $\mathcal{R}$ is totally matching: if $x, y \in \mathcal{R}$, then $(\mathbf{k}\cdot y)\cdot x=y$.
\end{example}
\begin{example} Every group $G$ is a totally matching partial applicative structure. Indeed, if $x,y\in G$, then $(yx^{-1})x=y$.
\end{example}

We are now ready to state the main result:
\begin{theorem} If $\mathcal{R}$ is a totally matching partial applicative structure such that there exists $r\in \mathcal{R}$ such that $|\mathcal{R}^{\mathsf{Im}(\{r\})}|>|\mathcal{R}|$, then $(\mathcal{P}(\mathcal{R})^{\mathcal{R}},\vdash_{\mathcal{R}})$ is not complete.
\end{theorem}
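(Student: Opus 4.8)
The plan is to exhibit a single set-indexed family in $(\mathcal{P}(\mathcal{R})^{\mathcal{R}},\vdash_{\mathcal{R}})$ that admits no supremum; since completeness requires \emph{every} family to have one, this suffices. Fix $r_0$ as in the hypothesis and set $Y:=\mathsf{Im}(\{r_0\})$, so that $Y\subseteq\mathcal{R}$ and $|\mathcal{R}^{Y}|>|\mathcal{R}|$. For each $y\in Y$ consider the ``point predicate'' $\delta_y\in\mathcal{P}(\mathcal{R})^{\mathcal{R}}$ given by $\delta_y(y):=\{y\}$ and $\delta_y(i):=\emptyset$ for $i\neq y$, and work with the family $(\delta_y)_{y\in Y}$.

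First I would describe the upper bounds of this family. Unwinding $\vdash_{\mathcal{R}}$, and using that $\delta_y$ is supported only at the index $y$ (so that the constraint at all other indices is vacuous, $\emptyset\Rightarrow\gamma(i)=\mathcal{R}$), the relation $\delta_y\vdash_{\mathcal{R}}\gamma$ holds iff there is some $r$ with $r\cdot y\in\gamma(y)$. Since $\mathcal{R}$ is \emph{totally matching}, this is equivalent to $\gamma(y)\neq\emptyset$. Hence $\gamma$ is a common upper bound of $(\delta_y)_{y\in Y}$ if and only if $\gamma(y)\neq\emptyset$ for every $y\in Y$. This is exactly where total matching is essential: it makes the common upper bounds easy to describe and, crucially, guarantees that the predicate built from the non-representable function below is itself an upper bound.

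Now suppose for contradiction that the family has a supremum $\beta$. Being an upper bound, $\beta$ satisfies $\beta(y)\neq\emptyset$ for every $y\in Y$. The main obstacle is that $\beta$ is only pinned down up to $\vdash_{\mathcal{R}}$-equivalence, so I cannot simply declare it to be the ``diagonal''; I must instead extract enough rigid structure from the abstract least upper bound to feed Lemma \ref{lemmaext}. The key move is a comparison with the diagonal bound $\sigma$ defined by $\sigma(y):=\{y\}$ for $y\in Y$ and $\sigma(i):=\emptyset$ otherwise, which is a common upper bound by the previous paragraph. As $\beta$ is least, $\beta\vdash_{\mathcal{R}}\sigma$, so there is $r'$ with $r'\cdot b=y$ for every $y\in Y$ and every $b\in\beta(y)$; this forces the non-empty sets $\beta(y)$ $(y\in Y)$ to be pairwise disjoint.

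Finally I would apply Lemma \ref{lemmaext} with index set $Y$ and the pairwise disjoint family $X_y:=\beta(y)$ (legitimate precisely because $|\mathcal{R}^{Y}|>|\mathcal{R}|$), obtaining a function $\psi\colon\bigcup_{y\in Y}\beta(y)\to\mathcal{R}$ that is constant on each block, say with value $c_y$ on $\beta(y)$, and that is represented by no element of $\mathcal{R}$. Setting $\gamma(y):=\{c_y\}$ for $y\in Y$ and $\gamma(i):=\emptyset$ otherwise makes $\gamma$ a common upper bound by the characterization above, so $\beta\vdash_{\mathcal{R}}\gamma$; unwinding this yields an $r$ with $r\cdot b=c_y=\psi(b)$ for every $y\in Y$ and $b\in\beta(y)$, i.e.\ an element of $\mathcal{R}$ representing $\psi$, contradicting the choice of $\psi$. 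Thus no supremum exists and the fiber is not complete. I expect the disjointness extraction — the double comparison, first against $\sigma$ to separate the blocks and then against $\gamma$ to manufacture a representative — to be the delicate point; the remainder is a routine unwinding of $\vdash_{\mathcal{R}}$ combined with total matching and Lemma \ref{lemmaext}, and notably uses no choice.
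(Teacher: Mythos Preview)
Your proof is correct and follows essentially the same strategy as the paper: point predicates, an assumed supremum, a comparison against a singleton-valued upper bound to force disjointness, Lemma~\ref{lemmaext}, and then total matching to manufacture a final upper bound that contradicts non-representability. The one difference is that you index your family by $Y=\mathsf{Im}(r_0)$ and separate via the identity $\sigma(y)=\{y\}$, obtaining pairwise disjoint $\beta(y)$ in one step; the paper instead indexes by $\mathsf{Dom}(r)$, separates via $\mathsf{sgl}_r(a)=\{r\cdot a\}$, and only gets ``$\psi(a)\cap\psi(b)\neq\emptyset\Rightarrow r\cdot a=r\cdot b$'', forcing an extra pass that coalesces along fibres of $r$ into $\psi'(b)=\bigcup_{r\cdot a=b}\psi(a)$ before invoking Lemma~\ref{lemmaext}. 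Your route is a mild streamlining of the same argument; it in fact shows that only the cardinality hypothesis $|\mathcal{R}^{Y}|>|\mathcal{R}|$ for some $Y\subseteq\mathcal{R}$ is used, the particular $r_0$ playing no further role.
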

\begin{proof}
Let $r\in \mathcal{R}$ satisfy $|\mathcal{R}^{\mathsf{Im}(\{r\})}|>|\mathcal{R}|$. 
For every $a\in \mathsf{Dom}(r)$, consider the function $\varphi_a:\mathcal{R}\rightarrow \mathcal{P}(\mathcal{R})$ defined as follows:
$$
\varphi_a(x)=\begin{cases}
\{a\}\textnormal{ if }x=a\\
\emptyset\textnormal{ if }x\neq a\\
\end{cases}
$$
Suppose that $\psi$ is a supremum for the family $(\varphi_a)_{a\in \mathsf{Dom}(r)}$.

If we define the function $\mathsf{sgl}_r:\mathcal{R}\rightarrow \mathcal{P}(\mathcal{R})$ as follows
$$\mathsf{slg}_{r}(x)=\begin{cases}
\{r\cdot x\}\textnormal{ if }x\in \mathsf{Dom}(r)\\
\emptyset\textnormal{ if }x\notin \mathsf{Dom}(r)\\
\end{cases},$$
then $(\varphi_a\vdash_{\mathcal{R}} \mathsf{sgl}_r)$ for every $a\in \mathsf{Dom}(r)$ (just use $r$ itself as a realizer). Thus $\psi\vdash_{\mathcal{R}} \mathsf{slg}_r$. 
In particular, this implies that if $\psi(a)\cap \psi(b)\neq \emptyset$, then $r\cdot a=r\cdot b$, and that $\psi(a)=\emptyset$ whenever $a\notin\mathsf{Dom}(r)$. Moreover, since $\varphi_a\vdash_\mathcal{R}\psi$ for every $a\in \mathsf{Dom}(r)$, for those $a$ we have that $\psi(a)\neq \emptyset$.

For every $b\in \mathsf{Im}(r)$, we define $\psi'(b):=\bigcup_{\{a\in \mathcal{R}|\,r\cdot a\downarrow, r\cdot a=b\}}\psi(a)$ and we consider the family $(\psi'(b))_{b\in \mathsf{Im}(r)}$.
We are in the conditions for applying lemma \ref{lemmaext}. Thus there exists a function $\varphi:\bigcup_{b\in \mathsf{Im}(r)}\psi'(b)\rightarrow \mathcal{R}$ such that $\varphi(x)=\varphi(y)$ for every $b\in \mathsf{Im}(r)$ and for every $x,y\in \psi'(b)$, and for which there is no $s\in\mathcal{R}$ such that $s\cdot x\downarrow$ and $s\cdot x=\varphi(x)$ for every $b\in \mathsf{Im}(r)$ and $x\in \psi'(b)$. 

Let $\tilde{\varphi}:\mathcal{R}\rightarrow \mathcal{R}$ be the function defined by

$$\widetilde{\varphi}(a):=\begin{cases}\{\varphi(x)|\,x\in \psi(a) \}\textnormal{ if }a\in \mathsf{Dom}(r)\\ \emptyset \textnormal{ otherwise}\end{cases}$$

Since $\mathcal{R}$ is totally matching, then $\varphi_a\vdash_\mathcal{R} \widetilde{\varphi}$ for every $a\in \mathsf{Dom}(r)$.

From this it follows that $\psi\vdash_\mathcal{R}\widetilde{\varphi}$, that is there exists $s\in \mathcal{R}$ such that for every $a\in \mathsf{Dom}(r)$ and for every $x\in \psi(a)$, $s\cdot x=\varphi(x)$. This is a contraddiction.

\end{proof}

\begin{cor} If $\mathcal{R}$ is preorderal and total matching, then $(\mathcal{P}(\mathcal{R})^{\mathcal{R}},\vdash_{\mathcal{R}})$ is not complete. In particular this happens if $\mathcal{R}$ is a partial combinatory algebra: the triposes giving rise to realizability toposes do not factor through the inclusion of the category of complete pre-Heyting algebras in $\mathsf{Bin}$.
\end{cor}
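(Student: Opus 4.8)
The plan is to deduce both assertions directly from the preceding theorem, so that the whole task reduces to verifying its cardinality hypothesis in the present setting. Concretely, the first step is to produce, from preorderality alone, an element $r\in\mathcal{R}$ with $|\mathcal{R}^{\mathsf{Im}(r)}|>|\mathcal{R}|$. Here preorderality of $\Sigma[\mathcal{R}]$ furnishes, by the characterization of preorderal $\Sigma[\mathcal{R}]$, an identity realizer $\mathbf{i}\in\mathcal{R}$ with $\mathbf{i}\cdot a=a$ for every $a\in\mathcal{R}$; hence $[\mathbf{i}]=\mathsf{id}_{\mathcal{R}}$ and its image is the whole of $\mathcal{R}$, i.e.\ $\mathsf{Im}(\mathbf{i})=\mathcal{R}$. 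I would therefore take $r:=\mathbf{i}$, which is exactly the point where the hypothesis ``preorderal'' is used.

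The second step is purely set-theoretic. With this choice $\mathcal{R}^{\mathsf{Im}(\mathbf{i})}=\mathcal{R}^{\mathcal{R}}$, and Cantor's theorem gives $|\mathcal{R}^{\mathcal{R}}|\geq 2^{|\mathcal{R}|}>|\mathcal{R}|$ as soon as $\mathcal{R}$ has at least two elements. Feeding $r=\mathbf{i}$ into the preceding theorem then yields at once that $(\mathcal{P}(\mathcal{R})^{\mathcal{R}},\vdash_{\mathcal{R}})$ is not complete, which is the first assertion. The only case escaping this argument is the trivial singleton $\mathcal{R}$, for which $|\mathcal{R}^{\mathcal{R}}|=|\mathcal{R}|$ and the fiber is in fact the complete two-element lattice; so nontriviality is tacitly assumed, and this is harmless for the applications below.

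For the remark on partial combinatory algebras I would verify the two standing hypotheses. Totally matching is already recorded in the Example above, since $(\mathbf{k}\cdot y)\cdot x=y$ for all $x,y$; and preorderality follows from the characterization of preorderal $\Sigma[\mathcal{R}]$, taking $\mathbf{i}$ for the identity clause and a composition combinator $\mathbf{b}$ (satisfying $\mathbf{b}\,s\,r\,a=s\,(r\,a)$ whenever the right-hand side is defined) to produce the required $s\Box r$. Hence every nontrivial pca falls under the first assertion. Finally, since the fibers of the tripos $\mathbf{p}_{\Sigma[\mathcal{R}]}$ are exactly the objects $(\mathcal{P}(\mathcal{R})^{I},\vdash_{I})$, the failure of completeness of the fiber over $I=\mathcal{R}$ means precisely that $\mathbf{p}_{\Sigma[\mathcal{R}]}$ cannot factor through the inclusion of complete pre-Heyting algebras into $\mathsf{Bin}$, which is the concluding statement.

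I do not expect any serious obstacle: all the genuine content lives in the preceding theorem, and the corollary needs only the single observation that the identity realizer forces $\mathsf{Im}(\mathbf{i})=\mathcal{R}$, collapsing the cardinality condition to Cantor's theorem. The one point demanding a little care is the explicit handling of the degenerate singleton case, where the conclusion genuinely fails and so nontriviality must be assumed.
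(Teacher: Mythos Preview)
Your argument is correct and is exactly the intended one: the paper gives no separate proof of this corollary, and the only content is precisely your observation that preorderality supplies an $\mathbf{i}$ with $\mathsf{Im}(\mathbf{i})=\mathcal{R}$, whence the cardinality hypothesis of the preceding theorem reduces to Cantor's theorem. Your explicit treatment of the trivial singleton case is a welcome addition, since the corollary as stated does silently exclude it.
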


\section{Conclusions} This is just the first step in a bottom-up investigation on PR-structures. Among all different directions of research connected with such a very general structure, there is at least one very interesting problem: as we have seen there are example of finite PR-structures giving rise to bounded lattical structures. Is there some minimal requirement expressed in terms of ``factorization through a category $\mathsf{C}$'' which guarantee that every finite $\mathsf{C}$-al PR-structure is a P-structure (in the posetal and in the non-posetal case)? For the posetal case, the category of distributive lattices seems to be a candidate, but this is just a conjecture.  

Another potentially interesting direction consists in the study of the relation between PR-structures and Miquel's implicative algebras (see \cite{Miquel2018ImplicativeAA}).

\bibliographystyle{plain}

\end{document}